\newtheorem{theorem}{Theorem}[section]
\newtheorem{prop}[theorem]{Proposition}
\newtheorem{lemma}[theorem]{Lemma}
\newtheorem{cor}[theorem]{Corollary}
\newtheorem{dfn}[theorem]{Definition}
\numberwithin{equation}{section}
\DeclareMathOperator{\R}{\mathbb{R}}
\def\R{\mathbb{R}}
\def\Z{\mathbb{Z}}
\def\cal R{\mathcal R}
\author{Igor Uljarevi\'c}
\email{igoru@matf.bg.ac.rs}
\address{Faculty of mathematics, University of Belgrade, Studentski trg 16, 11158 Belgrade,
Serbia}
\author{Jun Zhang}
\email{jzhang4518@ustc.edu.cn}
\address{The Institute of Geometry and Physics, University of Science and Technology of China, 96 Jinzhai Road, Hefei Anhui, 230026, China}
\title{Contact quasi-states and their applications}
\begin{document}

\maketitle

\begin{abstract}
We introduce the notions of partial contact quasi-state and contact quasi-measure. Using the contact spectral invariant from \cite{DUZ23}, one can construct partial contact quasi-states and contact quasi-measures on each contact manifold fillable by a Liouville domain with non-vanishing and $\Z$-graded symplectic homology. As an application, we present an alternative proof of the contact big fibre theorem from the recent work by Sun-Uljarevi\'c-Varolgunes \cite{SUV25} under a mild topological condition. Our proof follows a more ``classical'' approach developed by Entov-Polterovich \cite{EP06} in the symplectic setting.
\end{abstract}

\section{Introduction}

In analogy to partial symplectic quasi-states and symplectic quasi-measures of Entov-Polterovich \cite{EP06} (see also \cite{PR14}), this note introduces partial contact quasi-states and contact quasi-measures. Concrete examples of partial contact quasi-states and contact quasi-measures can be constructed via contact Hamiltonian Floer theory \cite{MU19} and spectral invariants derived from it \cite{DUZ23}. In particular, we construct a partial contact quasi-state $\zeta_\alpha$ and contact quasi-measure $\tau_\alpha$ on a Liouville-fillable contact manifold $M$ for each contact form $\alpha$ on $M$ provided the Liouville filling has non-vanishing symplectic homology. When restricted to Reeb-invariant functions, the properties of $\zeta_\alpha$ are completely analogous to the properties of partial symplectic quasi-states. 

However, in general situation, these properties often include additional error terms (compare, for instance, the triangle inequalities in Theorem~\ref{thm:spec},~\eqref{item:triangle} and Proposition~\ref{prop:trianglenonstrict}). These error terms are a result of estimating the difference between values of a given function along a Reeb trajectory. One can see the error terms as a manifestation of one typical feature of contact geometry: the constant contact Hamiltonians do not generate constant isotopies, as opposed to constant (symplectic) Hamiltonians. Similarly, the contact quasi-measure $\tau_\alpha$ behaves well on Reeb-invariant subsets, whereas on general subsets less so. 

Nevertheless, partial contact quasi-states and contact quasi-measures are powerful enough to detect non-trivial rigidity phenomena in contact geometry. We demonstrate this by giving an alternative proof of the contact big fibre theorem from \cite{SUV25} under the additional condition that the symplectic homology of the filling is $\Z$-graded\footnote{The condition we actually need is that the unit is not contained in the image of the map ${\rm SH}^\ast(W)\to {\rm SH}_\ast(W)$, so that our spectral invariant is finite. See \cite{C2024} for related discussion.}. 

\begin{theorem} [cf.~Corollary 1.10 in \cite{SUV25}]  \label{thm-bft}Let $(M, \xi)$ be a closed contact manifold that is fillable by a Liouville domain $W$ with non-zero symplectic homology and $c_1(W)=0$. Then, any contact involutive map $F: M \to \R^N$ has a fibre which is not contact displaceable. \end{theorem}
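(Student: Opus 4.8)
The plan is to transport the Entov--Polterovich strategy of \cite{EP06} to the contact category, using the partial contact quasi-state $\zeta_\alpha$ and the contact quasi-measure $\tau_\alpha$ constructed above. Fix a contact form $\alpha$ on $M$ whose Liouville filling is $W$; the hypotheses that the symplectic homology of $W$ is non-zero and $c_1(W)=0$ guarantee (via the $\Z$-grading, as in the footnote) that the contact spectral invariant is finite, so that $\zeta_\alpha$ and $\tau_\alpha$ are well defined with $\tau_\alpha(M)=1$. From their construction I want to isolate two facts: (i) every contact displaceable subset of $M$ is $\tau_\alpha$-null; and (ii) on the subalgebra of functions of the form $g\circ F$, with $g$ ranging over (say Lipschitz) functions on $\R^N$, the partial quasi-state $\zeta_\alpha$ is \emph{additive}, so that the push-forward set function $\mu := F_\ast\tau_\alpha$, given by $\mu(A)=\tau_\alpha(F^{-1}(A))$, is a genuine Borel probability measure on the compact image $F(M)\subset\R^N$. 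Property (ii) is equivalent, via the Riesz representation theorem, to saying that $g\mapsto\zeta_\alpha(g\circ F)$ is a positive, normalized, linear functional on $C(F(M))$.

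Granting (i) and (ii), the conclusion follows from a short covering argument. Suppose, for contradiction, that every fibre $F^{-1}(p)$ is contact displaceable. A compact contact displaceable set admits an open displaceable neighbourhood, and by continuity of $F$ together with compactness of $F^{-1}(p)$ there is an open set $U_p\ni p$ in $\R^N$ whose preimage $F^{-1}(U_p)$ is still contact displaceable; hence $\mu(U_p)=\tau_\alpha(F^{-1}(U_p))=0$ by (i). The family $\{U_p\}_{p\in F(M)}$ is an open cover of the compact set $F(M)$, so finitely many $U_{p_1},\dots,U_{p_k}$ already cover it. Because $\mu$ is an honest measure on $\R^N$ by (ii), finite subadditivity is available and gives $1=\mu(F(M))\le\sum_{i=1}^{k}\mu(U_{p_i})=0$, a contradiction. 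Therefore at least one fibre of $F$ is not contact displaceable.

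The first input, property (i), should follow directly from the displacement-energy estimate for $\zeta_\alpha$ and the definition of $\tau_\alpha$, in exact analogy with the symplectic statement that displaceable sets are quasi-measure null. The main obstacle is property (ii), the additivity of the push-forward. In the symplectic case this rests on the linearity of a partial quasi-state on a Poisson-commutative subalgebra, but in the contact world the relevant identities acquire the error terms discussed in the introduction (compare Theorem~\ref{thm:spec}~\eqref{item:triangle} with Proposition~\ref{prop:trianglenonstrict}), which originate in the failure of constant contact Hamiltonians to generate constant isotopies. I expect to neutralize these errors using the contact involutivity of $F$: the commuting flows of the components $F_i$ render the functions $g\circ F$ effectively Reeb-invariant for the purposes of the quasi-state, and on Reeb-invariant functions the properties of $\zeta_\alpha$ are exactly those of a symplectic partial quasi-state, with no error term. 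Once the errors are shown to vanish on the subalgebra generated by $F$, additivity of $\mu$ is immediate, and the two inputs above close the argument.
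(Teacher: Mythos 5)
Your covering skeleton is exactly the paper's argument, but your input (ii) contains a genuine gap: nothing in the construction gives \emph{additivity} of $g\mapsto\zeta_\alpha(g\circ F)$, and without it the Riesz representation step collapses. The functional $\zeta_\alpha$ is a \emph{partial} quasi-state: axiom \eqref{pqm:triangle} of Definition~\ref{dfn-pqs} supplies only the one-sided inequality $\zeta(f+g)\leqslant\zeta(f)+\zeta(g)$ on commuting strict pairs. You are right that contact involutivity of $F$ neutralizes the error terms of Proposition~\ref{prop:trianglenonstrict} (the functions $g\circ F$ are genuinely Reeb-invariant, and on strict functions the properties of Theorem~\ref{thm:spec} hold with no corrections), but killing the error terms yields subadditivity, not superadditivity: the spectral invariant $c_\alpha$ is defined through the image of the unit in ${\rm SH}_\ast(W)$, and no property in Theorem~\ref{thm:spec} gives $c_\alpha(f\# g)\geqslant c_\alpha(f)+c_\alpha(g)$. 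This is already the situation in the symplectic setting of \cite{EP06}: partial symplectic quasi-states built from spectral invariants are in general not linear on Poisson-commutative subalgebras, which is precisely why the Entov--Polterovich big fibre theorem is run with subadditive quasi-measures rather than honest push-forward measures. So $\mu=F_\ast\tau_\alpha$ is not known to be a Borel measure, and your sentence ``additivity of $\mu$ is immediate'' is unsupported. A secondary inaccuracy: your (i), that \emph{every} contact displaceable subset is $\tau_\alpha$-null, is too strong. Since $\tau_\alpha$ is computed as an infimum over \emph{strict} (Reeb-invariant) functions equal to $1$ on $A$, the infimum effectively sees the Reeb saturation of $A$; vanishing is only established for displaceable sets that are preimages of closed sets under the involutive map (Lemma~\ref{lem:qm-vanishing}), equivalently in this context Reeb-invariant sets.

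Fortunately, the detour through an honest measure is unnecessary, and your argument is repairable into exactly the paper's proof. Finite subadditivity along sets of the form $A_j=F^{-1}(X_j)$ with $X_j\subset\R^N$ closed is precisely axiom \eqref{qm-subadditivity} of Definition~\ref{dfn-qm}, and your covering argument never needs subadditivity for anything else: replace your open sets $U_p$ by closed sets $X_j$ whose interiors cover $F(M)$ (each fibre is compact and $F$ is a closed map, so each $p$ admits a closed ball $X_p\ni p$ with $F^{-1}(X_p)$ contained in a displaceable neighbourhood of the fibre), note that each $F^{-1}(X_j)$ is automatically Reeb-invariant so Lemma~\ref{lem:qm-vanishing} gives $\tau_\alpha(F^{-1}(X_j))=0$, and conclude $1=\tau_\alpha(M)\leqslant\sum_j\tau_\alpha(F^{-1}(X_j))=0$. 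This is the paper's proof verbatim; what your version attempted to buy with full additivity (countable additivity, a genuine push-forward measure) is strictly more than the theorem requires, and, with the tools constructed in this paper, more than is available.
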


As in \cite{SUV25}, ``contact involutive'' here means that there exists a contact form $\alpha$ on $M$ such that   $F$ is Reeb invariant with respect to $\alpha$ and such that the coordinates of $F,$ seen as contact Hamiltonains (with respect to $\alpha$), generate commuting contact isotopies. Notice that Reeb invariance cannot be removed from the definition of contact involutive map so that Theorem~\ref{thm-bft} still holds. This can be seen by considering an example of a circle filled by a higher-genus surface. 
 
\section{Preliminaries}

\subsection{Notation} Here, let us recall some notations. 

\begin{enumerate}[i)]
\item Throughout the paper, $(W,\lambda)$ denotes a Liouville domain with ${\rm SH}_\ast(W)\not=0$ and $c_1(W)=0,$ $M:=\partial W$ denotes its boundary, and $\alpha:=\left.\lambda\right|_{M}$ the induced contact form. The contact structure $\ker \alpha$ is denoted by $\xi$ and the Reeb flow of $\alpha$ by $\varphi_R^t$.
\item Function $h: [0,1] \times M \to \R$ serves as a contact Hamiltonian function. Often in this paper, $h$ is autonomous (time-independent). The contact isotopy generated by $h$ is denoted by $\varphi_h^t$ for $t \in [0,1]$. 
\item For two contact Hamiltonian functions $g, h: [0,1] \times M \to \R$, denote 
\[ (g \# h)_t = g_t + (\kappa_g^t h_t) \circ (\varphi^t_g)^{-1} \]
 where $\kappa_g^t$ is the conformal factor of $\varphi_g^t$ (with respect to the contact  form $\alpha$), that is, $(\varphi_g^t)^*\alpha = \kappa_g^t \alpha$. Note that $\varphi_{g\#h}^t =\varphi_g^t \varphi_h^t$. 
\item For two contact Hamiltonian functions $g, h: [0,1] \times M \to \R$, denote by $g\bullet h$ the concatenation of the contact Hamiltonians $g$ and $h$ defined by 
    \[ \left(g\bullet h\right)_t:= \left\{ \begin{matrix} 2g_{2t} & \text{for } t\in\left[0,\frac{1}{2}\right]\\ 2h_{2t-1}&\text{for } t\in\left[\frac{1}{2}, 1\right].\end{matrix} \right.\]
Note that $g \bullet h$ generates the contact isotopy 
\[ \varphi^t_{g\bullet h} = \left\{ \begin{matrix} \varphi_g^{2t} & \text{for } t\in\left[0,\frac{1}{2}\right]\\ \varphi_h^{2t-1} \varphi_g^1 &\text{for } t\in\left[\frac{1}{2}, 1\right].\end{matrix} \right.\]
\item Denote by $\widetilde{{\rm Cont}}_0(M)$ the universal cover of the identity component of the contactomorphism group of $(M, \xi)$. Note that in $\widetilde{{\rm Cont}}_0(M)$, we have $[\{\varphi_{g\#h}^t\}] = [\{\varphi^t_{g\bullet h}\}]$.
\item A function $h: [0,1] \times M \to \R$ is called strict with respect to a contact form $\alpha$ if $dh(R_{\alpha}) =0$. A contact isotopy $\{\varphi_t\}_{t \in [0,1]}$ is called strict with respect to a contact form $\alpha$ if it is generated by a strict contact Hamiltonian function. It is readily verified that a contact isotopy is strict if, and only if, it commutes with the Reeb flow (of $\alpha$). 
\item Given smooth functions $g, h: M \to \R$, the contact Poisson bracket is defined by 
\[ \{g, h\}_{\alpha}: = - dg(X_h) - dh(R_{\alpha})\cdot h \]
where $X_h$ is the contact Hamiltonian vector field\footnote{Here, we use the following sign convention: for a Liouville domain $(W, \lambda)$, its symplectic structure is given by $\omega = d\lambda$ and $\iota_{X_H} \omega = \omega(X_H, \cdot) = dH$. By this sign convention, a contact Hamiltonian vector field $X_h$ (generated by $h$) is uniquely determined by the equations $\alpha(X_h)  = -h$ and $\iota_{X_h}d\alpha = dh - dh(R_{\alpha})\cdot\alpha$.}  generated by $h$. Note that $h$ is strict with respect to $\alpha$ if, and only if, $\{h, 1\}_{\alpha} =0$.
\item  A smooth map $F=(f_1,\ldots, f_N):M\to\R^N$ is called contact $\alpha$-involutive if the Poisson brackets $\{f_j, 1\}_\alpha$ and $\{f_i,f_j\}_\alpha$ vanish for all $i,j=1,\ldots, N.$ For brevity, $F: M \to \R^N$ is called contact involutive if it is contact $\alpha$-involutive for some contact form $\alpha$. This notion agrees with Definition 1.7 in \cite{SUV25}. 
\end{enumerate}

\subsection{Contact spectral invariant} 

In this section, we recall the construction of the contact spectral invariant from \cite{DUZ23} that is associated to the unit in symplectic homology. This spectral invariant is extracted from contact Hamiltonian Floer homology ${\rm HF}_\ast(h)$, the homology introduced in \cite{MU19} and further developed in \cite{DU22,UZ22,C23,DUZ23,CHK23,CU24}. The contact Hamiltonian Floer homology ${\rm HF}_\ast(h)$ is associated to a Liouville domain $(W,\lambda)$ and an admissible contact Hamiltonian on its boundary $h_t:\partial W\to\R$. Recall that admissible means $h$ is 1-periodic in time and the corresponding homogeneous Hamiltonian on the symplectization has no 1-periodic orbits. The following are important features of ${\rm HF}_\ast(h).$

\begin{enumerate}[(A)]
	\item There is a well defined continuation map $ {\rm HF}_\ast(h)\to {\rm HF}_\ast(g) $ whenever $h\leqslant g$ are admissible contact Hamiltonians.
	\item The groups ${\rm HF}_\ast(h)$ together with the continuation maps form a directed system of groups indexed by the set of all admissible contact Hamiltonians.  The direct limit of this directed system is equal to Viterbo's symplectic homology ${\rm SH}_\ast(W)$.
\end{enumerate}

Denote by $e$ the unit in ${\rm SH}_\ast(W)$, and by $\alpha$ the contact form $\left.\lambda\right|_{\partial W}$. The contact spectral invariant $c_\alpha(h)$ is defined as follows:
\begin{equation}\label{eq:spec-inv}
c_\alpha(h):= -\inf\left\{ \eta\in \R\:|\: e\in {\rm im}\left( {\rm HF}_\ast(\eta\# h)\to {\rm SH}_\ast(W) \right)  \right\},
\end{equation}
for $h:[0,1]\times \partial W\to\R$. Although not indicated by the notation, the spectral invariant $c_\alpha$ does depend on the filling $W$. Formally speaking, ${\rm HF}_\ast(\eta\# h)$ is well defined only if $\eta\#h$ is admissible. By Lemma~1.2 in \cite{DUZ23},  $\eta\#h$ is admissible for $\eta$ outside of a discrete subset $\mathcal{S}_h$ of $\R$. More precisely, $\mathcal{S}_h$ consists of all the time shifts\label{shift} of the translated points of $\varphi_h^1.$ In the next theorem, we list some of the properties of the spectral invariant.

\begin{theorem}[\cite{DUZ23}]\label{thm:spec}
Let $(W,\lambda)$ be a Liouville domain with $SH_\ast(W)\not=0$. Then, the spectral invariant $c_\alpha$, defined by \eqref{eq:spec-inv}, satisfies the following properties:
\begin{enumerate}
	\item For $a\in\R$, we have $c_\alpha(a)=a$. 
	\item For all $h$, we have $c_\alpha(h)\in \mathcal{S}_h.$
	\item\label{item:stability}  For $h$ and $g$ strict with respect to $\alpha$, we have\footnote{See \cite[Theorem~2.2]{DUZ23} for a general stability property.}
	\[\min_{[0,1]\times \partial W} (h-g)\leqslant c_\alpha(h)- c_\alpha(g)\leqslant \max_{[0,1]\times\partial W}(h-g). \]
	\item \label{item:triangle} For  $h$ and $g$ strict with respect to $\alpha$, we have
	\[ c_\alpha(h\bullet g)\leqslant c_\alpha(h) + c_\alpha( g). \]
	\item \label{item:descent} If $h$ and $g$ give rise to the same element in $\widetilde{\rm Cont}_0(\partial W)$, then $c_\alpha(h)=c_\alpha(g)$. In other words, $c_\alpha$ is well defined on $\widetilde{\rm Cont}_0(\partial W)$.
\end{enumerate}
\end{theorem}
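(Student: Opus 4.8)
The plan is to derive all five properties from the two structural features (A) and (B) of contact Hamiltonian Floer homology, together with its product structure, after repackaging \eqref{eq:spec-inv} as a threshold. I would set
\[ E_h:=\left\{\eta\in\R \;:\; e\in{\rm im}\left({\rm HF}_\ast(\eta\#h)\to{\rm SH}_\ast(W)\right)\right\}, \]
so that $c_\alpha(h)=-\inf E_h$. Since $\eta\leqslant\eta'$ gives $\eta\#h\leqslant\eta'\#h$, property (A) provides a continuation map ${\rm HF}_\ast(\eta\#h)\to{\rm HF}_\ast(\eta'\#h)$ commuting with the structure maps to ${\rm SH}_\ast(W)$ from (B); hence $E_h$ is upward closed and $\inf E_h$ is a genuine threshold. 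I would also record the simplification that if $h$ is strict then the Reeb flow preserves $h$, so $\eta\#h=\eta+h$, which is what makes the strict statements clean.

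The normalization and descent are the quickest. For a constant $a$ (which is strict) we have $\eta\#a=\eta+a$, and the Floer homology of a constant slope identifies, for small positive slope, with $H_\ast(W)$ in such a way that $e$ lies in the image of ${\rm HF}_\ast\to{\rm SH}_\ast(W)$ exactly when the slope is positive; thus $E_a=(-a,\infty)$ and $c_\alpha(a)=a$. Property \eqref{item:descent} is immediate once one invokes the invariance (from \cite{MU19,DUZ23}) of ${\rm HF}_\ast(\eta\#h)$ and its structure map to ${\rm SH}_\ast(W)$ under the equivalence in $\widetilde{\rm Cont}_0(\partial W)$: then $E_h=E_g$, so $c_\alpha(h)=c_\alpha(g)$.

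For the stability estimate \eqref{item:stability}, write $C=\max(h-g)$ and $c=\min(h-g)$ with $h,g$ strict. The pointwise bounds $g+c\leqslant h\leqslant g+C$ become $(\eta+c)+g\leqslant\eta+h\leqslant(\eta+C)+g$ after using $\eta\#h=\eta+h$. Chasing $e$ through the monotone continuation maps of (A) in the two resulting commuting diagrams over ${\rm SH}_\ast(W)$ yields the inclusions $E_h+C\subseteq E_g$ and $E_g-c\subseteq E_h$, that is, $\inf E_g\leqslant\inf E_h+C$ and $\inf E_h\leqslant\inf E_g-c$; translating back through $c_\alpha=-\inf E$ gives $\min(h-g)\leqslant c_\alpha(h)-c_\alpha(g)\leqslant\max(h-g)$. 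Spectrality follows from a different use of (A): as $\eta$ ranges over a component of $\R\setminus\mathcal{S}_h$ no translated point of $\varphi_{\eta\#h}^1$ is created or destroyed, so the continuation maps ${\rm HF}_\ast(\eta\#h)\to{\rm HF}_\ast(\eta'\#h)$ are isomorphisms and the condition $\eta\in E_h$ is locally constant there; hence the boundary value $\inf E_h=-c_\alpha(h)$ must lie in $\mathcal{S}_h$, once one checks the one-sided semicontinuity that makes the infimum attained.

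The triangle inequality \eqref{item:triangle} is the step I expect to be the main obstacle. Here one needs the pair-of-pants product on contact Hamiltonian Floer homology, compatible with the concatenation $\bullet$ and with the ring structure on ${\rm SH}_\ast(W)$, under which the unit is idempotent, $e\cdot e=e$. Applying it to $e\otimes e$ for representatives with shifts $\eta_1$ and $\eta_2$ should exhibit $e$ in the image of ${\rm HF}_\ast$ of a representative (in $\widetilde{\rm Cont}_0(\partial W)$) of $(\eta_1+\eta_2)\#(h\bullet g)$, which via \eqref{item:descent} relates $E_h$, $E_g$ and $E_{h\bullet g}$ and produces \eqref{item:triangle}. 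The delicate point, and where I expect the real work to lie, is getting the \emph{direction} of the inequality correct: one must reconcile the minus sign in \eqref{eq:spec-inv}, the convention in $\eta\#h$, and the orientation of the pair-of-pants, and carefully match the concatenated, shifted Hamiltonian with $(\eta_1+\eta_2)\#(h\bullet g)$ up to an element of $\widetilde{\rm Cont}_0(\partial W)$. Constructing this product and establishing its shift (filtration) compatibility is the genuinely geometric input; the remaining four properties are comparatively formal consequences of (A), (B) and the threshold reformulation.
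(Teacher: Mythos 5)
First, a point of orientation: the paper does not prove Theorem~\ref{thm:spec} at all --- it is imported verbatim from \cite{DUZ23} --- so the comparison is with that reference's argument, whose broad strategy (threshold set, continuation maps, invariance under homotopy rel endpoints, a pair-of-pants product for the triangle inequality) your outline does reconstruct correctly in spirit. However, there are two genuine gaps. The first is your opening claim that $\eta\leqslant\eta'$ gives $\eta\#h\leqslant\eta'\#h$ pointwise, from which you derive that $E_h$ is upward closed. With the paper's conventions, a constant $\eta$ generates the (backward) Reeb flow with conformal factor $1$, so $(\eta\#h)_t=\eta+h_t\circ\varphi_R^{\eta t}$, and hence $(\eta'\#h)_t-(\eta\#h)_t=(\eta'-\eta)+h_t\circ\varphi_R^{\eta' t}-h_t\circ\varphi_R^{\eta t}$, which is negative wherever the oscillation of $h$ along Reeb orbits exceeds $\eta'-\eta$. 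So for general (non-strict) $h$ the pointwise inequality fails, and upward closedness of $E_h$ is not a direct consequence of (A); one must instead use that $\eta'\#h$ is homotopic rel endpoints to $(\eta\#h)\bullet(\eta'-\eta)$, invoke the descent property \eqref{item:descent}, and run monotone continuation only in the constant factor --- this is exactly the mechanism behind the ${\rm osc}_\alpha$ error terms the paper discusses, and behind the zig-zag isomorphisms used in Corollary~\ref{cor:vanishing}. Your clean threshold picture is valid verbatim only for strict $h$; since properties (1), (2) and (5) of the theorem are asserted for \emph{all} $h$ (spectrality in particular), this step as written would fail. Relatedly, in the normalization you should flag that the implication ``$e$ in the image $\Leftrightarrow$ slope positive'' is precisely where ${\rm SH}_\ast(W)\neq 0$ enters: if the symplectic homology vanished, $e=0$ would lie in every image and $c_\alpha$ would degenerate.

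The second gap is the triangle inequality itself, which you explicitly defer: the construction of a product compatible with concatenation and with the shifts, and the identification of $(\eta_1\#h)\bullet(\eta_2\#g)$ with a representative of $(\eta_1+\eta_2)\#(h\bullet g)$ in $\widetilde{\rm Cont}_0(\partial W)$, is the core analytic and algebraic content of \eqref{item:triangle}, not a routine verification. Note also that this identification is exactly where strictness is consumed: for strict $h$ the isotopy $\varphi_h^t$ commutes with the Reeb flow, so $\varphi_R^{-\eta_1}\varphi_h^1\varphi_R^{-\eta_2}\varphi_g^1=\varphi_R^{-(\eta_1+\eta_2)}\varphi_h^1\varphi_g^1$, whereas without strictness one cannot commute the Reeb shift past $\varphi_h^1$ and the error term of Proposition~\ref{prop:trianglenonstrict} appears. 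Since subadditivity is the property that powers everything downstream (the quasi-state axiom \eqref{pqm:triangle} and the big fibre theorem), labeling it the ``expected main obstacle'' while leaving both the product and the bookkeeping unconstructed means the proposal proves the formal parts (stability, descent, normalization for strict data) but not the theorem.
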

 The properties \eqref{item:triangle} and \eqref{item:descent} in Theorem~\ref{thm:spec} imply that the triangle inequality \eqref{item:triangle} continues to hold if we replace $\bullet$ by $\#$. The properties \eqref{item:stability} and \eqref{item:triangle} have their versions that hold for $h$ and $g$ not necessarily strict. The next proposition states the triangle inequality for general $h$ and $g$.
 
 \begin{prop}[\cite{DUZ23}]\label{prop:trianglenonstrict}
 For all $h,g\in C^\infty([0,1]\times M)$, the following inequality holds
 \[ c_\alpha(h\bullet g)\leqslant c_\alpha(h) + c_\alpha(g) + 4\cdot \max\{ {\rm osc}_\alpha h, {\rm osc}_\alpha g \}. \]
 Here, $ {\rm osc}_\alpha h:= \max_{s,t}\left( \max_M h_t\circ\varphi_R^s - \min_M h_t \right). $
 \end{prop}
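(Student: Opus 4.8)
The plan is to reduce the general inequality to the strict triangle inequality, Theorem~\ref{thm:spec}\eqref{item:triangle}, by sandwiching $h$ and $g$ between strict contact Hamiltonians whose mutual gap is controlled by the oscillation. The crucial observation is that the time-slice extrema are already strict: setting $h^+_t:=\max_M h_t$ and $h^-_t:=\min_M h_t$ (functions of $t$ alone, hence constant along each slice $\{t\}\times M$ and so Reeb-invariant), both $h^\pm$ are strict with respect to $\alpha$, satisfy $h^-\le h\le h^+$, and obey $\max_{[0,1]\times M}(h^+-h^-)=\max_t(\max_M h_t-\min_M h_t)={\rm osc}_\alpha h$; here the $\varphi_R^s$ appearing in the definition of ${\rm osc}_\alpha$ is immaterial, since $\varphi_R^s$ is a diffeomorphism of $M$ and so $\max_M(h_t\circ\varphi_R^s)=\max_M h_t$. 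I define $g^\pm$ analogously. The only technical point, to be dispatched by an $\epsilon$-approximation, is that $\max_M h_t$ is merely Lipschitz in $t$, so I replace it by a smooth $H^+(t)$ with $\max_M h_t\le H^+(t)\le \max_M h_t+\epsilon$ (and dually for $H^-$), run the argument, and let $\epsilon\to 0$.

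First I would record the monotonicity of $c_\alpha$. If $h\le g$ pointwise, then for every $\eta$ one has $\eta\# h\le \eta\# g$, because $(\eta\# h)_t$ is obtained from $\eta+h_t$ by postcomposing $h_t$ with a diffeomorphism of $M$ (the Reeb reparametrization $\varphi_R^{\eta t}$), which preserves pointwise inequalities. Feature~(A) then supplies continuation maps ${\rm HF}_\ast(\eta\# h)\to {\rm HF}_\ast(\eta\# g)$ that, by the directed-system structure in feature~(B), commute with the maps to ${\rm SH}_\ast(W)$. Hence the set of $\eta$ for which $e$ lies in the image for $h$ is contained in the corresponding set for $g$, and the defining infima give $c_\alpha(h)\le c_\alpha(g)$.

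Next I would assemble the estimate. Since $h\bullet g\le h^+\bullet g^+$, monotonicity gives $c_\alpha(h\bullet g)\le c_\alpha(h^+\bullet g^+)$, and the strict triangle inequality, Theorem~\ref{thm:spec}\eqref{item:triangle}, applied to the strict pair $(h^+,g^+)$, yields $c_\alpha(h^+\bullet g^+)\le c_\alpha(h^+)+c_\alpha(g^+)$. To convert $c_\alpha(h^+)$ back to $c_\alpha(h)$, I apply the strict stability property, Theorem~\ref{thm:spec}\eqref{item:stability}, to the strict pair $(h^+,h^-)$, obtaining $c_\alpha(h^+)-c_\alpha(h^-)\le \max_{[0,1]\times M}(h^+-h^-)={\rm osc}_\alpha h$; combined with $c_\alpha(h^-)\le c_\alpha(h)$ from monotonicity this gives $c_\alpha(h^+)\le c_\alpha(h)+{\rm osc}_\alpha h$, and likewise $c_\alpha(g^+)\le c_\alpha(g)+{\rm osc}_\alpha g$. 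Putting these together yields
\[ c_\alpha(h\bullet g)\le c_\alpha(h)+c_\alpha(g)+{\rm osc}_\alpha h+{\rm osc}_\alpha g\le c_\alpha(h)+c_\alpha(g)+2\max\{{\rm osc}_\alpha h,\,{\rm osc}_\alpha g\}, \]
which is in fact sharper than the stated bound with constant $4$; the surplus comfortably absorbs the $\epsilon$ from the smoothing above.

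The step I expect to be the main obstacle is not any of the inequalities, which become formal once monotonicity is in hand, but the analytic bookkeeping: confirming that the sandwiching Hamiltonians may be taken smooth (and, where the framework demands it, admissible) without destroying strictness, and checking carefully that the Reeb reparametrization concealed in $\eta\# h$ does not obstruct monotonicity. I would isolate these points in the approximation step and pass to the limit, the slack in the stated constant covering any loss incurred there.
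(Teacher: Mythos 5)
Your formal skeleton is sound as far as it goes: monotonicity of $c_\alpha$ does follow from features (A)--(B), since a constant contact Hamiltonian $\eta$ generates the Reeb reparametrization with conformal factor $1$, so $(\eta\# h)_t=\eta+h_t\circ\varphi_R^{\eta t}$ and $h\leqslant g$ gives $\eta\#h\leqslant \eta\#g$ slice by slice (one caveat you gloss over: if $\eta$ is admissible for $h$ but lies in $\mathcal{S}_g$, you cannot just nudge $\eta$, because $\eta\mapsto\eta\#g$ is \emph{not} pointwise monotone in $\eta$ for non-strict $g$; you need the discreteness of $\mathcal{S}_g$ together with an approximation of the infimum, and this deserves an explicit argument). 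Granting monotonicity, the sandwich by the slicewise extrema $h^\pm_t$, combined with \eqref{item:stability} and \eqref{item:triangle} of Theorem~\ref{thm:spec}, correctly yields $c_\alpha(h\bullet g)\leqslant c_\alpha(h)+c_\alpha(g)+2\max_t\bigl(\max_M h_t-\min_M h_t\bigr)+2\max_t\bigl(\max_M g_t-\min_M g_t\bigr)$, up to the smoothing $\epsilon$'s, which do vanish in the limit. Note also that the paper itself gives no proof of this proposition (it is imported from \cite{DUZ23}, where the argument is Floer-theoretic), so your route is in any case different from the source.

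The genuine problem is your step declaring the $\varphi_R^s$ in ${\rm osc}_\alpha$ ``immaterial.'' You read the printed formula correctly --- as written, $\max_M(h_t\circ\varphi_R^s)=\max_M h_t$ and ${\rm osc}_\alpha h$ collapses to the full spatial oscillation --- but that collapse is a symptom of a misprint, not a simplification you may exploit. The introduction says the error terms arise from ``estimating the difference between values of a given function along a Reeb trajectory,'' i.e.\ the intended quantity is of the shape $\max_{s,t}\max_M\bigl(h_t\circ\varphi_R^s-h_t\bigr)$, which vanishes for strict $h$. Two internal consistency checks force this reading: for strict $h,g$ the proposition should be compatible with the error-free inequality \eqref{item:triangle}; and, decisively, the proof of Theorem~\ref{thm-qs} needs $K(f)=4\max\{{\rm osc}_\alpha(kh\#f),{\rm osc}_\alpha\bar f\}$ to be independent of $k$ for strict $h$. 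Under the Reeb-directional definition this holds (for strict $h$ one has $\kappa_{kh}^t\equiv 1$ and $\varphi_{kh}^t$ commutes with $\varphi_R^s$, so ${\rm osc}_\alpha(kh\#f)=\max_{s,t}\max_M(f_t\circ\varphi_R^s-f_t)$), whereas under the printed definition ${\rm osc}_\alpha(kh\#f)$ grows linearly in $k$ and the vanishing property would fail. Measured against the intended statement, your sandwich is structurally inadequate: $\max(h^+-h^-)$ is the full oscillation of $h$ over $M$ and cannot be bounded by the Reeb oscillation --- a strict nonconstant $h$ has Reeb oscillation $0$ but arbitrarily large $h^+-h^-$, and there your bound does not even recover \eqref{item:triangle}. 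So you have proved the literal statement (with a better constant), but not the statement the paper uses; closing that gap requires tracking the Reeb conjugation $h_t\circ\varphi_R^{\eta t}$ inside $\eta\#(h\bullet g)$ at the Floer-theoretic level, as in \cite{DUZ23}, rather than reducing to spatially constant Hamiltonians.
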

 
 The error term in Proposition~\ref{prop:trianglenonstrict} can be fixed by a simple algebraic trick. Namely, one can consider
 \begin{equation}\label{eq:ctilde}
 \tilde{c}_\alpha(h):= c_\alpha(h)+4 {\rm osc}\: h.
\end{equation}
It is not difficult to check that $\tilde{c}_\alpha$ satisfies the triangle inequality without an error term. However, $\tilde{c}(h)$ no longer descends to $\widetilde{\rm Cont}_0(M).$

The contact spectral invariant $c_\alpha$ satisfies a version of conjugacy invariance, that we prove in Section~\ref{sec-proof}.
\begin{prop}\label{prop:conj}
Let $\varphi:M\to M$ be a  strict contactomorphism such that $\varphi\in{\rm Cont}_0(M)$. Then, $c_\alpha(h\circ\varphi)=c_\alpha(h)$ for all contact Hamiltonians $h_t:M\to\R$.
\end{prop}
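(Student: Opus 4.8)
The plan is to use the descent property Theorem~\ref{thm:spec}\eqref{item:descent} to reduce the claim to a naturality (conjugation-invariance) statement for contact Hamiltonian Floer homology, and then to use the hypothesis $\varphi\in{\rm Cont}_0(M)$ to show that the resulting induced automorphism of ${\rm SH}_\ast(W)$ is the identity.

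\emph{Step 1: the conjugation formula.} First I would record how conjugation by $\varphi$ acts on contact Hamiltonians. The isotopy $t\mapsto\varphi^{-1}\varphi_h^t\varphi$ is generated by the vector field $(\varphi^{-1})_\ast X_{h_t}$, where $X_{h_t}$ generates $\varphi_h^t$. Since $\varphi$ is strict, $\varphi^\ast\alpha=\alpha$, so the defining relation $\alpha(X_{h_t})=-h_t$ gives $\alpha\big((\varphi^{-1})_\ast X_{h_t}\big)=-\,h_t\circ\varphi$. Hence $\varphi^{-1}\varphi_h^t\varphi$ is generated by $h_t\circ\varphi$, i.e. $[\{\varphi_{h\circ\varphi}^t\}]=[\{\varphi^{-1}\varphi_h^t\varphi\}]$ in $\widetilde{\rm Cont}_0(M)$. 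Applying the same computation to a constant $\eta$ shows that conjugation fixes the class of the constant Hamiltonian $\eta$, whence $\eta\#(h\circ\varphi)$ is the conjugate of $\eta\#h$. By Theorem~\ref{thm:spec}\eqref{item:descent}, it suffices to compare the spectral invariants of $\eta\#h$ and $\eta\#(h\circ\varphi)$.

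\emph{Step 2: naturality of ${\rm HF}_\ast$.} Because $\varphi$ is strict it lifts to the exact symplectomorphism $\hat\varphi(x,r)=(\varphi(x),r)$ of the symplectization end $M\times(0,\infty)$ with primitive $r\alpha$. Conjugation by $\hat\varphi$ carries the homogeneous Hamiltonian of $\eta\#h$ to that of $\eta\#(h\circ\varphi)$ (using Step~1), sends $1$-periodic orbits to $1$-periodic orbits while preserving their action (as $\hat\varphi^\ast(r\alpha)=r\alpha$), and therefore induces action-preserving isomorphisms ${\rm HF}_\ast(\eta\#h)\xrightarrow{\cong}{\rm HF}_\ast(\eta\#(h\circ\varphi))$ compatible with the continuation maps of~(A). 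Passing to the direct limit of~(B) yields an induced automorphism $\Phi_\ast$ of ${\rm SH}_\ast(W)$ that fits into a commuting square with the maps ${\rm HF}_\ast(\eta\#\,\cdot)\to{\rm SH}_\ast(W)$.

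\emph{Step 3: $\Phi_\ast={\rm id}$, the main obstacle.} The crux is to show $\Phi_\ast(e)=e$, and here I would use $\varphi\in{\rm Cont}_0(M)$: choose a contact isotopy $\{\varphi^s\}_{s\in[0,1]}$ with $\varphi^0={\rm id}$ and $\varphi^1=\varphi$, homogenize its generating contact Hamiltonians and cut them off inside $W$ to obtain a Hamiltonian isotopy $\{\Psi^s\}$ of the completion $\hat W=W\cup_M(M\times[1,\infty))$ whose behavior on the cylindrical end at $s=1$ is exactly $\hat\varphi$. Since each $\Psi^s$ is Hamiltonian, the induced automorphisms of ${\rm SH}_\ast(W)$ form a family that equals the identity at $s=0$ and, by homotopy invariance of the construction, is independent of $s$; thus $\Phi_\ast={\rm id}$ and $\Phi_\ast(e)=e$. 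Consequently, for every $\eta$ outside the discrete set $\mathcal{S}_h$ one has $e\in{\rm im}\big({\rm HF}_\ast(\eta\#h)\to{\rm SH}_\ast(W)\big)$ if and only if $e\in{\rm im}\big({\rm HF}_\ast(\eta\#(h\circ\varphi))\to{\rm SH}_\ast(W)\big)$, so the two infima in \eqref{eq:spec-inv} coincide and $c_\alpha(h\circ\varphi)=c_\alpha(h)$. The main obstacle is precisely this step: making rigorous that conjugation by the cylindrical-at-infinity Hamiltonian isotopy $\{\Psi^s\}$ acts as the identity on the directed system of~(B) and fixes $e$; the non-strictness of the intermediate $\varphi^s$ (hence the rescaling of the end by $\Psi^s$ for $s<1$) is what requires care here.
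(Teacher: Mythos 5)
Your proposal is correct in outline and its skeleton coincides with the paper's proof in Section~\ref{sec:conj}: both lift $\varphi$ to the completion $\hat{W}$, use strictness to see that conjugation carries $\eta\#h$ to $\eta\#(h\circ\varphi)$ (so the shift $\eta$ is untouched, which is what makes the two infima in \eqref{eq:spec-inv} comparable), invoke conjugation isomorphisms ${\rm HF}_\ast(\eta\#h)\to{\rm HF}_\ast(\eta\#(h\circ\varphi))$ commuting with the continuation maps, and conclude by tracking the unit $e\in{\rm SH}_\ast(W)$. (Your appeal to the descent property \eqref{item:descent} in Step 1 is superfluous, since $c_\alpha$ is defined directly through $\eta\#h$.) Where you genuinely diverge is the key step. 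The paper works from the outset with $\psi\in{\rm Symp}^\ast(W)$, a symplectomorphism of all of $\hat{W}$ with ideal restriction $\psi_\infty=\varphi$, and cites \cite[Section~5]{U23} for the fact that the induced automorphism of ${\rm SH}_\ast(W)$ preserves $e$ for \emph{every} such $\psi$ --- no isotopy to the identity is needed, the unit being canonical (compatible with the map from the homology of $W$); the hypothesis $\varphi\in{\rm Cont}_0(M)$ enters only to guarantee that a lift $\psi$ exists, which is your Step 3 construction, and this is why the paper can remark that the proposition extends to ideal restrictions of arbitrary symplectomorphisms. You instead aim for the stronger claim $\Phi_\ast={\rm id}$ via homotopy invariance along $\{\Psi^s\}$; this route can be made to work, but it is precisely the point you leave unproven (for $s<1$ the maps $\Psi^s$ rescale the end, so the induced isomorphisms do not act on a fixed Floer group and the constancy in $s$ requires a real argument), and it is avoidable. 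One ordering issue to fix in a final write-up: your Step 2 conjugates by $\hat{\varphi}(x,r)=(\varphi(x),r)$, which is defined only on the conical end and so does not by itself induce isomorphisms of Floer homologies on $\hat{W}$; the global extension from Step 3 must logically precede Step 2, exactly as in the paper's ${\rm Symp}^\ast(W)$ formalism (note also that strictness gives $\kappa_\varphi\equiv 1$, so $(\psi_\infty)^\ast h=h\circ\varphi$, matching the paper's formula).
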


In fact, the proposition also holds if we, instead of $\varphi\in{\rm Cont}_0(M)$ assume that $\varphi$ is the ideal restriction\footnote{See Section~\ref{sec:conj} for the definition of the ideal restriction.} of a symplectomorphysm $\varphi: \widehat{W}\to\widehat{W}.$

\section{Contact quasi-states and quasi-measures}

In this section, we introduce the notions of partial contact quasi-state and contact quasi-measure.  

\begin{dfn}\label{dfn-pqs}  A {\bf partial contact quasi-state} with respect to $\alpha$ is a functional $\zeta: C^\infty(M) \to  \R$ which satisfies the following axioms:
	\begin{enumerate}[(1)]
		\item \label{pqm:normalization} $\zeta(1)=1$.
		\item\label{pqm:stability} If $f, g\in C^\infty(M)$ are strict with respect to $\alpha$, then
			\[\min_M(f-g)\leqslant \zeta(f) - \zeta(g) \leqslant \max_M (f-g).\]
		\item\label{pqm:homogeneous} For all $f\in C^\infty(M)$ and $s\in\R^+$, we have $\zeta(sf)=s\zeta(f).$
		\item\label{pqm:conjugacy} If $\varphi\in{\rm Cont}_0(M)$ is strict with respect to $\alpha$, then 
		\[\zeta(h\circ\varphi) = \zeta(h)\]
		for all $h\in C(M).$
		\item\label{pqm:vanishing} If $f$ is strict with respect to $\alpha$ and ${\rm supp}\:f$ is displaceable by an element of ${\rm Cont}_0(M),$ then $\zeta(f)=0.$
		\item\label{pqm:triangle} The inequality
		\[ \zeta(f+g)\leqslant \zeta(f) + \zeta(g) \]
		holds for all $f,g\in C^\infty(M)$ such that $0 = \{g,h\}_\alpha = \{g, 1\}_\alpha = \{h, 1\}_\alpha.$
	\end{enumerate}
\end{dfn}

\begin{theorem} \label{thm-qs} Let $(M,\xi)$ be a closed contact manifold that is fillable by a Liouville domain with non-zero symplectic homology. Then, for every contact form $\alpha$ of $\xi$, there exists a contact quasi-state with respect to $\alpha$. \end{theorem}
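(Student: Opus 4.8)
The plan is to define the quasi-state by homogenizing the contact spectral invariant $c_\alpha$ of Theorem~\ref{thm:spec}. For $f\in C^\infty(M)$, regarded as a time-independent contact Hamiltonian, I set
\[
\zeta_\alpha(f):=\lim_{r\to\infty}\frac{c_\alpha(rf)}{r}.
\]
The first task is to show this limit exists. For any autonomous $f$ one has $\varphi^s_{rf}\varphi^s_{tf}=\varphi^s_f\varphi^s_f\cdots=\varphi^s_{(r+t)f}$, and the identity $(\kappa^s_h h)\circ(\varphi^s_h)^{-1}=h$ for a contact Hamiltonian $h$ forces the concatenation to be exact, $rf\# tf=(r+t)f$. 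When $f$ is strict, so is every $rf$, and the triangle inequality for $\#$ (obtained from Theorem~\ref{thm:spec}\eqref{item:triangle} and \eqref{item:descent}) makes $r\mapsto c_\alpha(rf)$ subadditive; comparing $rf$ with the constant $r\min_M f$ via the stability estimate Theorem~\ref{thm:spec}\eqref{item:stability} bounds it below by $r\min_M f$, so Fekete's lemma yields the limit, equal to $\inf_{r>0}c_\alpha(rf)/r$. Existence of the limit for non-strict $f$, which is all that axiom~\eqref{pqm:homogeneous} needs beyond the strict case, is routine and can be arranged through the subadditive correction $\tilde c_\alpha$ of~\eqref{eq:ctilde} (whose error term is linear in $r$ and hence harmless after dividing by $r$); I omit these standard details.

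Next I would dispatch the ``linear'' axioms by passing the corresponding property of $c_\alpha$ to the limit after dividing by $r$. Normalization~\eqref{pqm:normalization} follows from $c_\alpha(r)=r$, and homogeneity~\eqref{pqm:homogeneous} is automatic once the limit exists, since evaluating along the subsequence $r'=rs$ gives $\zeta_\alpha(sf)=s\zeta_\alpha(f)$ for $s>0$. Stability~\eqref{pqm:stability} follows from Theorem~\ref{thm:spec}\eqref{item:stability} applied to the strict functions $rf,rg$. Conjugacy~\eqref{pqm:conjugacy} follows from Proposition~\ref{prop:conj}: as $r(h\circ\varphi)=(rh)\circ\varphi$, we get $c_\alpha(r(h\circ\varphi))=c_\alpha(rh)$ for strict $\varphi\in{\rm Cont}_0(M)$. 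For the triangle axiom~\eqref{pqm:triangle}, the hypotheses make $f,g$ strict with $\{f,g\}_\alpha=0$, so their flows commute and $rf\# rg=r(f+g)$; Theorem~\ref{thm:spec}\eqref{item:triangle} then gives $c_\alpha(r(f+g))\le c_\alpha(rf)+c_\alpha(rg)$, and division by $r$ yields $\zeta_\alpha(f+g)\le\zeta_\alpha(f)+\zeta_\alpha(g)$.

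The real work is the vanishing axiom~\eqref{pqm:vanishing}. First I would reduce it to a one-sided bound. If $f$ is strict then so is $-f$, with $\{f,-f\}_\alpha=0$, so the triangle axiom already proved gives $0=\zeta_\alpha(0)\le\zeta_\alpha(f)+\zeta_\alpha(-f)$; since $\pm f$ are supported in the same displaceable set, proving $\zeta_\alpha(f)\le 0$ for every strict $f$ with displaceable support then forces $\zeta_\alpha(f)=0$. In turn, $\zeta_\alpha(f)\le 0$ reduces to a uniform (in $r$) upper bound $c_\alpha(rf)\le C$. To produce it I would write the displacing element as $\psi=\varphi_G^1$, use $rf\# G\#\bar G=rf$ in $\widetilde{\rm Cont}_0(M)$ (where $\bar G$ generates $\psi^{-1}$) together with the triangle inequality and Theorem~\ref{thm:spec}\eqref{item:descent} to get
\[
c_\alpha(rf)\le c_\alpha(rf\# G)+c_\alpha(\bar G),
\]
and then invoke the energy--capacity type identity $c_\alpha(rf\# G)=c_\alpha(G)$: because $\psi$ displaces ${\rm supp}(rf)$, the map $\varphi^1_{rf}\varphi^1_G$ has the same translated points as $\varphi^1_G$, so its spectral value is independent of $r$. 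This gives $c_\alpha(rf)\le c_\alpha(G)+c_\alpha(\bar G)=:C$, hence $\zeta_\alpha(f)\le 0$.

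I expect this last step to be the main obstacle, for two contact-specific reasons. First, both the triangle inequality and the conjugacy statement that I want require the displacing element to be strict, whereas a general $\psi\in{\rm Cont}_0(M)$ need not be; invoking Proposition~\ref{prop:trianglenonstrict} in its place introduces an oscillation error $4\,{\rm osc}_\alpha(rf\# G)$ that grows linearly in $r$ and would ruin the uniform bound. I would therefore need either to realize the displacement by a strict contactomorphism, or to use the ideal-restriction strengthening of Proposition~\ref{prop:conj} noted after its statement. Second, the identity $c_\alpha(rf\# G)=c_\alpha(G)$ rests on the contact analogue of the statement that the periodic orbits of $\varphi^1_{rf}\varphi^1_G$ coincide with those of $\varphi^1_G$ once ${\rm supp}(rf)$ is displaced; phrasing this rigorously in terms of translated points and the spectral set $\mathcal S_{rf\# G}$, and checking that any residual $r$-dependence is at worst sublinear, is the heart of the matter.
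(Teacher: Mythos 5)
Your proposal follows the paper's proof almost line for line through the first two thirds: the same homogenization $\zeta_\alpha(f)=\lim_k c_\alpha(kf)/k$ (Definition~\ref{def:zeta}), the same Fekete argument run through the corrected invariant $\tilde c_\alpha$ of \eqref{eq:ctilde}, and the same derivations of axioms \eqref{pqm:normalization}--\eqref{pqm:conjugacy} and \eqref{pqm:triangle} from Theorem~\ref{thm:spec}, Proposition~\ref{prop:conj}, and the identity $rf\#rg=r(f+g)$ for strict commuting data. Your reduction of the vanishing axiom to the one-sided bound $\zeta_\alpha(f)\leqslant 0$, by applying subadditivity to $\pm f$, is a clean (and slightly more explicit) packaging of the paper's conclusion that $c_\alpha(kf)$ is bounded. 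The gap is exactly where you located it, in the vanishing axiom, and it has two parts. First, the identity $c_\alpha(rf\#G)=c_\alpha(G)$ does not follow merely from the coincidence of translated points. The paper proves (Proposition~\ref{prop:yaron}) that the spectral sets $\mathcal{S}_{(s\cdot rf)\#G}$ and $\mathcal{S}_G$ coincide for all $s\in[0,1]$ --- note this uses Reeb invariance of ${\rm supp}\,f$, i.e.\ strictness of $f$, which your sketch does not mention --- and then still needs the zig-zag isomorphisms ${\rm HF}_\ast(\eta\#G)\to{\rm HF}_\ast(\eta\#(rf)\#G)$ along this smooth family of admissible Hamiltonians (Corollary~\ref{cor:vanishing}, resting on Section~7.1 of \cite{DUZ23}), which commute with the maps to ${\rm SH}_\ast(W)$ and preserve the unit $e$. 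Equality of spectral sets alone does not transfer the spectral value; one needs this continuation argument (or a continuity-plus-discreteness argument), and you leave it as an acknowledged hole.

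Second, your diagnosis of the oscillation error is mistaken, and your proposed repairs would damage the theorem. The error term $4\max\{{\rm osc}_\alpha(rf\#G),\,{\rm osc}_\alpha\bar G\}$ does \emph{not} grow in $r$: since $rf$ is strict, $\kappa_{rf}^t\equiv 1$ and $\varphi_{rf}^t$ commutes with the Reeb flow, so the deviation of $(rf\#G)_t=rf+G_t\circ(\varphi_{rf}^t)^{-1}$ along Reeb orbits --- which is what ${\rm osc}_\alpha$ penalizes --- equals that of $G_t$ itself, independently of $r$. This is precisely the paper's observation that the constant $K(f)$ depends only on the displacing isotopy, and it is why Proposition~\ref{prop:trianglenonstrict} can be applied with a general, non-strict displacing Hamiltonian $G$. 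Your fallback options (realizing the displacement by a strict contactomorphism, or invoking the ideal-restriction version of Proposition~\ref{prop:conj}) are therefore unnecessary; worse, requiring a strict displacing element would prove only a weakened form of axiom \eqref{pqm:vanishing}, with displaceability by special elements rather than by arbitrary elements of ${\rm Cont}_0(M)$, and that weaker axiom would not support Lemma~\ref{lem:qm-vanishing} and the proof of Theorem~\ref{thm-bft}.
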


In Section \ref{sec-proof}, we use a spectral invariant derived from contact Hamiltonian Floer homology (see Section~2 in \cite{DUZ23}) to construct a partial contact quasi-state $\zeta_{\alpha}$ for every contact form $\alpha$ on $(M, \xi)$ as in Theorem \ref{thm-qs}. 

\begin{dfn} \label{dfn-qm} Let $(M, \xi)$ be a closed contact manifold with a contact form $\alpha$. A (subadditive) {\bf contact quasi-measure} with respect to $\alpha$ is a map $\tau: \mathfrak{F}(M) \to  \R$, defined on the set $\mathfrak{F}(M)$ of all closed subsets\footnote{By a standard procedure \cite{PR14}, one can define $\tau$ on open subsets as well. We do not include this definition in the present paper because it is not used in our applications. } of $M$, which satisfies the following axioms:
	\begin{enumerate}[(1)]
		\item \label{qm-normalization}$\tau(M)=1$.
                 \item \label{qm-monotonicity}If $A \subset B$, then $\tau(A) \leq \tau(B)$.
                \item\label{qm-subadditivity} Let $F:M\to\R^N$ be a contact $\alpha$-involutive map and $X_1, \ldots, X_k\subset \R^N$ closed subsets. Denote $A_j:= F^{-1}(X_j)$. Then,
                \[ \tau(A_1\cup \cdots\cup A_k)\leqslant \tau(A_1)+\cdots+ \tau(A_k). \]
\end{enumerate}
\end{dfn}

\begin{cor}\label{cor:qm} Let $(M,\xi)$ be a closed contact manifold that is fillable by a Liouville domain with non-zero symplectic homology. Then, for every contact form $\alpha$ of $\xi$, there exists a (subadditive) contact quasi-measure $\tau$ with respect to $\alpha$. \end{cor}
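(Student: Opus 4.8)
The plan is to build $\tau$ directly from the partial contact quasi-state $\zeta_\alpha$ furnished by Theorem~\ref{thm-qs}, setting
\[ \tau(A):=\inf\left\{\,\zeta_\alpha(f)\;:\;f\in C^\infty(M),\ 0\le f\le 1,\ \{f,1\}_\alpha=0,\ f|_A\equiv 1\,\right\} \]
for every closed $A\subseteq M$. The normalization axiom~\eqref{qm-normalization} is then immediate, since $f|_M\equiv1$ together with $f\le1$ forces $f\equiv1$ and $\zeta_\alpha(1)=1$ by \eqref{pqm:normalization}. Monotonicity~\eqref{qm-monotonicity} is equally formal: $A\subseteq B$ shrinks the feasible set, so its infimum grows; the constant function $1$ shows every feasible set is nonempty, and \eqref{pqm:stability} with \eqref{pqm:homogeneous} (which give $\zeta_\alpha\ge0$ on nonnegative functions and $\zeta_\alpha(0)=0$) show that $\tau$ is $[0,1]$-valued. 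All the content is therefore in the subadditivity axiom~\eqref{qm-subadditivity}.

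Two preliminary facts drive that proof. First, writing $F=(F_1,\dots,F_N)$, if $F$ is contact $\alpha$-involutive then for any smooth $g,h\colon\R^N\to\R$ the compositions $g\circ F$ and $h\circ F$ are strict and Poisson-commute, $\{g\circ F,h\circ F\}_\alpha=0$. Strictness is the chain rule applied to $dF_l(R_\alpha)=0$. The commuting is subtler than in the symplectic case, because the contact Hamiltonian vector field is not linear in the Hamiltonian; instead I would use that on strict functions the bracket is antisymmetric (a one-line computation from $\iota_{X_v}d\alpha=dv$ and antisymmetry of $d\alpha$) together with $\{F_k,F_l\}_\alpha=0$, to obtain $\{F_k,h\circ F\}_\alpha=0$ for each $k$, and hence $\{g\circ F,h\circ F\}_\alpha=-\sum_k(\partial_k g\circ F)\,dF_k(X_{h\circ F})=0$. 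Second is a reduction lemma: for a saturated set $A=F^{-1}(X)$, the infimum defining $\tau(A)$ is unchanged if one restricts to competitors of the form $g\circ F$ with $g\colon\R^N\to[0,1]$ and $g|_X\equiv1$. Given any feasible $f$ and $\delta>0$, the set $\{f\le1-\delta\}$ is compact (here compactness of $M$ is used) and disjoint from $A$, so $F(\{f\le1-\delta\})$ is compact and disjoint from $X$; choosing $g$ supported in the complementary neighborhood of $X$ with $g|_X\equiv1$ yields $g\circ F\le(1-\delta)^{-1}f$, whence $\zeta_\alpha(g\circ F)\le(1-\delta)^{-1}\zeta_\alpha(f)$ by \eqref{pqm:stability} and \eqref{pqm:homogeneous}. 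Letting $\delta\to0$ proves the claim.

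With these in hand the subadditivity is the classical Entov–Polterovich combination. Fix $\epsilon>0$; for each $j$ the reduction lemma provides $g_j\colon\R^N\to[0,1]$ with $g_j|_{X_j}\equiv1$ and $\zeta_\alpha(g_j\circ F)\le\tau(A_j)+\epsilon$. Since each $g_i\equiv1$ near $X_i$, the sum $\sum_j g_j$ is $\ge1$ on a neighborhood of $\bigcup_j X_j$, so I can pick a smooth $G\colon\R^N\to[0,1]$ with $G\equiv1$ near $\bigcup_j X_j$ and $G\le\sum_j g_j$ everywhere. Then $f:=G\circ F$ is a strict competitor for $A_1\cup\cdots\cup A_k$, and
\[ \tau(A_1\cup\cdots\cup A_k)\le\zeta_\alpha(G\circ F)\le\zeta_\alpha\!\left(\sum_j g_j\circ F\right)\le\sum_j\zeta_\alpha(g_j\circ F)\le\sum_j\tau(A_j)+k\epsilon, \]
where the second inequality is monotonicity from $G\circ F\le\sum_j g_j\circ F$ (both strict), and the third is the triangle inequality \eqref{pqm:triangle} iterated over $j$ — legitimate precisely because the $g_j\circ F$ are strict and pairwise Poisson-commute by the first preliminary fact, so every partial sum remains strict and commutes with the next term. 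Letting $\epsilon\to0$ finishes the proof.

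I expect the reduction lemma to be the main obstacle, as it is the step where compactness of $M$ and the homogeneity of $\zeta_\alpha$ must conspire to replace arbitrary competitors by $F$-saturated ones while controlling the $(1-\delta)^{-1}$ factor. The second likely snag is the Poisson-commuting of $g\circ F$ and $h\circ F$: the failure of linearity of $h\mapsto X_h$ in the contact setting means the computation has to be routed through antisymmetry of the bracket on strict functions rather than through a naive chain rule.
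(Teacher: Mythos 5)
Your proof is correct and follows the same overall template as the paper's Lemma~\ref{lem:qsqm}: the identical definition of $\tau$ as an infimum of $\zeta_\alpha$ over strict competitors equal to $1$ on $A$, the same reduction to $F$-saturated competitors, and the same appeal to the triangle inequality~\eqref{pqm:triangle}. Where you genuinely diverge is in the execution of the two technical steps, and your route is arguably more careful. For the reduction, the paper takes the fibrewise infimum $\mu(x):=\inf\{h(y)\:|\:y\in F^{-1}(x)\}$ --- which in general is only lower semicontinuous --- and correspondingly must first extend $\zeta$ to continuous strict functions via the stability axiom~\eqref{pqm:stability}; your $\delta$-cutoff plus smooth Urysohn argument in $\R^N$ stays entirely within the smooth category and sidesteps both issues, at the price of the $(1-\delta)^{-1}$ factor, which homogeneity~\eqref{pqm:homogeneous} kills in the limit. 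For the combination step, the paper caps with the merely continuous function $h=\min\{1,\mu_a\circ F+\mu_b\circ F\}$ and argues two sets at a time, while you cap upstairs with a smooth $G\leqslant\sum_j g_j$, $G\equiv1$ near $\bigcup_j X_j$, handling all $k$ sets at once and keeping every competitor smooth, so the hypothesis of~\eqref{pqm:triangle} applies verbatim. You also prove, rather than assume, that $g\circ F$ and $h\circ F$ Poisson-commute for contact $\alpha$-involutive $F$ --- routed correctly through antisymmetry of the bracket on strict functions, since $h\mapsto X_h$ is not linear in the contact setting --- a fact the paper invokes implicitly. One small internal mismatch to repair: your reduction lemma as stated only produces $g$ with $g|_X\equiv1$, but your combination step uses ``$g_i\equiv1$ near $X_i$''; this is harmless, since $K=F(\{f\leqslant 1-\delta\})$ is compact and disjoint from the closed set $X$, hence at positive distance from it, so the smooth Urysohn function in your construction can be taken identically $1$ on an $\epsilon$-neighbourhood of $X$ with $\epsilon$ smaller than that distance.
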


In fact, we will show that any contact quasi-state induces a (subadditive) quasi-measure. In particular, for any contact form $\alpha$ on $(M, \xi)$, we obtain a quasi-measure from $\zeta_{\alpha}$ mentioned above, which is denoted by $\tau_{\alpha}$. By the vanishing property of $\zeta_{\alpha}$, we have $\tau_{\alpha}(A) = 0$ for any $A$ that is contact displaceable and Reeb-invariant with respect to $\alpha$. This serves as the key step in the proof of Theorem \ref{thm-bft}. 

\section{Proofs} \label{sec-proof}

\subsection{Proof of Proposition~\ref{prop:conj}}\label{sec:conj}

For a Liouville domain $(W,\lambda)$ with boundary $M$, denote by ${\rm Symp}^*(W)$ the group of symplectomorphisms $\psi:\widehat{W}\to\widehat{W}$ of its completion that preserve the Liouville form outside of a compact subset. For each $\psi\in{\rm Symp}^*(W)$ there exists a contactomorphism $\varphi:M\to M$ such that $\psi(x,r) = \left(\varphi(x), \frac{r}{\kappa_\varphi(x)} \right)$ on the conical end for $r$ large enough. Recall that $\kappa_\varphi$ denotes the conformal factor of $\varphi,$ i.e. the positive function $M\to\R^+$ determined by $\varphi^\ast \alpha= \kappa_\varphi\cdot \alpha.$ We call the contactomorphism $\varphi$ \emph{the ideal restriction} of $\psi$ and denote $\varphi:=\psi_\infty$. The key ingredient in the proof of Proposition~\ref{prop:conj} is the existence of the conjugation isomorphisms (cf. \cite[Section~5]{U23})
\[ \mathcal{\psi} : {\rm HF}_\ast(h) \to {\rm HF}_\ast((\psi_\infty)^*h) \]
for all $\psi\in{\rm Symp}^*(W)$ and for admissible contact Hamiltonians $h_t:M\to \R.$ Here, $(\psi_\infty)^*h:= (h\circ \psi_\infty)/{\kappa_{\psi_\infty}}.$ The conjugation isomorphism $\mathcal{\psi}$ commutes with the continuation maps, thus inducing an automorphism of symplectic homology (called and denoted the same). Moreover, the conjugation isomorphism $\mathcal{\psi}:{\rm SH}_\ast(W)\to {\rm SH}_\ast(W)$ preserves the unit.

\begin{proof}[Proof of Proposition~\ref{prop:conj}]
	Let $\varphi:M\to M$ be a strict contactomorphism that is the ideal restriction of some $\psi\in {\rm Symp}^*(W).$ Since $\varphi$ is strict, we have  $\varphi^\ast(\eta\#h)=\eta\#(\varphi^* h)$ for all contact Hamiltonians $h_t:\partial W\to \R$ and all $\eta\in\R.$ Since $\mathcal{\psi}$ commutes with the continuation maps and preserves the unit $e$ in ${\rm SH}_\ast(W)$, we have that
	\[e\in{\rm im}\left( {\rm HF}_\ast(\eta\#h)\to {\rm SH}_\ast(W) \right)\]
	if, and only if,
	\[e\in{\rm im}\left( {\rm HF}_\ast(\eta\#\varphi^*h)\to {\rm SH}_\ast(W) \right).\]
	Consequently, $c_\alpha(h)= c_\alpha(\varphi^*h)=c_\alpha(h\circ\varphi).$
\end{proof}


\subsection{Proof of Theorem~\ref{thm-qs}}

Let $M$ be the boundary of a Liouville domain $(W,\lambda)$ and let $\alpha:=\left.\lambda\right|_{M} $ be the induced contact form on $M$.

\begin{dfn}\label{def:zeta}
	For a smooth function $h\in C^\infty(M)$, define
	\[\zeta_\alpha(h):=\lim_{k\to\infty}\frac{c_\alpha(kh)}{k},\]
	where $c_\alpha$ is as in \eqref{eq:spec-inv} on page~\pageref{eq:spec-inv}.
\end{dfn}

We show here that $\zeta_\alpha$ is a partial contact quasi-state with respect to $\alpha$. We start by proving that $\zeta_\alpha$ is well defined, i.e. that the limit in Definition~\ref{def:zeta} exists and is finite. 

\begin{lemma}
	The sequence $\frac{c_\alpha(kh)}{k}$ converges for all $h\in C^\infty(M).$
\end{lemma}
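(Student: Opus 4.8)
The plan is to homogenize $c_\alpha$ by Fekete's subadditivity lemma. Writing $a_k := c_\alpha(kh)$, I want to exhibit an auxiliary sequence that is genuinely subadditive. Since $h\in C^\infty(M)$ is autonomous, scaling the Hamiltonian acts as a time reparametrization, $\varphi_{kh}^t = \varphi_h^{kt}$, and hence the composition $kh\# lh$ generates the path $t\mapsto \varphi_h^{kt}\varphi_h^{lt} = \varphi_h^{(k+l)t}$, which is exactly the path generated by $(k+l)h$. Combined with the identity $[\{\varphi_{kh\#lh}^t\}] = [\{\varphi_{kh\bullet lh}^t\}]$ in $\widetilde{\rm Cont}_0(M)$ from Notation~(v) and the descent property Theorem~\ref{thm:spec}\eqref{item:descent}, this yields the key identity
\[ c_\alpha(kh\bullet lh) = c_\alpha\bigl((k+l)h\bigr). \]

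The hard part is the error term in the triangle inequality Proposition~\ref{prop:trianglenonstrict}: for the naive sequence $a_k$ it grows linearly in $k$ and $l$, which is too large for Fekete to apply. To absorb it I would pass to the corrected invariant $\tilde c_\alpha$ from \eqref{eq:ctilde}, which satisfies the triangle inequality without an error term, and set $b_k := \tilde c_\alpha(kh) = a_k + 4k\,{\rm osc}_\alpha(h)$, using that ${\rm osc}_\alpha(kh) = k\,{\rm osc}_\alpha(h)$ for $k>0$. The subadditivity of $(b_k)$ then follows from two steps: first, since ${\rm osc}_\alpha(kh\bullet lh) = 2\max\{k,l\}\,{\rm osc}_\alpha(h)$ and $2\max\{k,l\}\geq k+l$, the identity above gives $b_{k+l} = \tilde c_\alpha((k+l)h) \leq \tilde c_\alpha(kh\bullet lh)$; second, the error-free triangle inequality for $\tilde c_\alpha$ gives $\tilde c_\alpha(kh\bullet lh) \leq \tilde c_\alpha(kh) + \tilde c_\alpha(lh) = b_k + b_l$. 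Hence $b_{k+l}\leq b_k + b_l$.

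Fekete's lemma then produces $b_k/k \to \inf_k b_k/k \in [-\infty,\infty)$, and it remains to rule out the value $-\infty$. For this I would use the monotonicity of $c_\alpha$, which is immediate from the continuation maps built into its definition: the pointwise inequality $kh \geq k\min_M h$ gives $a_k = c_\alpha(kh) \geq c_\alpha(k\min_M h) = k\min_M h$ by the normalization $c_\alpha(a)=a$. Consequently $b_k/k \geq \min_M h + 4\,{\rm osc}_\alpha(h) > -\infty$, so the limit is a finite real number. Finally, since $a_k/k = b_k/k - 4\,{\rm osc}_\alpha(h)$ differs from $b_k/k$ only by a constant, the sequence $c_\alpha(kh)/k$ converges to a finite limit, which is what we want.

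I expect the only genuine subtlety to be the bookkeeping of the oscillation under concatenation and scaling when verifying the subadditivity of $(b_k)$; the remaining ingredients are direct invocations of the stated properties of $c_\alpha$ and $\tilde c_\alpha$, together with the elementary inequality $2\max\{k,l\}\geq k+l$ that makes the oscillation correction absorb the error term.
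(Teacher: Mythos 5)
Your proof is correct and takes essentially the same route as the paper's: the paper likewise passes to $\tilde c_\alpha(kh)=c_\alpha(kh)+4\,\mathrm{osc}(kh)$, asserts that this sequence is subadditive (resting, as your argument does, on the error-free triangle inequality for $\tilde c_\alpha$ stated after Proposition~\ref{prop:trianglenonstrict} together with the descent property) and bounded below by $k(4\max h-3\min h)$ --- which is exactly your bound $\min_M h+4\,\mathrm{osc}_\alpha(h)$ --- and then concludes by Fekete's lemma and the observation $\mathrm{osc}(kh)/k=\mathrm{osc}(h)$. Your write-up simply makes explicit the bookkeeping the paper leaves implicit, namely the identity $c_\alpha(kh\bullet lh)=c_\alpha((k+l)h)$ for autonomous $h$ via flow reparametrization and Theorem~\ref{thm:spec}\eqref{item:descent}, the oscillation computations, and the monotonicity bound $c_\alpha(kh)\geqslant k\min_M h$.
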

\begin{proof}
	As in \eqref{eq:ctilde} on page~\pageref{eq:ctilde}, denote $\tilde{c}_\alpha(h):= c_\alpha(h) + 4{\rm osc}\: h.$ The sequence $\tilde{c}_\alpha(kh)$ is subadditive and $\frac{\tilde{c}_\alpha(kh)}{k}$ is bounded from below (by $4 \max h - 3 \min h$). For every such a sequence, the sequence $\frac{\tilde{c}_\alpha(kh)}{k}$ converges to a finite number. 	
	Since $\frac{{\rm osc}\: (kh) }{k}={\rm osc}\: h$ for all $k$, this implies that $\frac{c_\alpha(kh)}{k}$ converges to a finite number as well.
\end{proof}

\begin{prop}\label{prop:yaron}
	Let $U$ be a Reeb invariant subset and let $\varphi_f^t:M\to M$ be a contact isotopy such that $\varphi_f^1(U)\cap U=\emptyset.$ Then, for all $h$ with ${\rm supp}\:h\subset U$, we have\footnote{See page~\pageref{shift} for the definition of $\mathcal{S}_f.$ }
	\[ \mathcal{S}_{h\#f} =\mathcal{S}_f.\]
	In other words, the sets of shifts for translated points of the contactomorphisms $\varphi_h^1\varphi_f^1$ and $\varphi_f^1$ are equal. In fact, the sets of translated points for a given shift coincide for these contactomorphisms.
\end{prop}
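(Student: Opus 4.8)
## Proof Proposal

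The plan is to understand translated points of the product contactomorphism $\varphi_h^1 \varphi_f^1$ in terms of translated points of $\varphi_f^1$, exploiting the fact that $h$ is supported away from where $\varphi_f^1$ maps $U$. Recall that a point $x \in M$ is a translated point of a contactomorphism $\psi$ with shift $s$ if $\psi(x) = \varphi_R^s(x)$ and the conformal factor $\kappa_\psi(x) = 1$; the set $\mathcal{S}_f$ records precisely the shifts $s$ arising this way for $\psi = \varphi_f^1$. The key geometric observation is that the hypothesis $\varphi_f^1(U) \cap U = \emptyset$ forces any translated point of $\varphi_h^1 \varphi_f^1$ to lie \emph{outside} $U$, and on the complement of $U$ the factor $\varphi_h^1$ acts as the identity (since $\operatorname{supp} h \subset U$), so the two contactomorphisms agree there.

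First I would make the support condition precise: because $\operatorname{supp} h \subset U$, the isotopy $\varphi_h^t$ is the identity on $M \setminus U$ for all $t$, and in particular $\varphi_h^1 = \mathrm{id}$ outside $U$ with conformal factor $\kappa_{h}^1 \equiv 1$ there. Next I would analyze where a translated point $x$ of $\varphi_h^1\varphi_f^1$ can sit. Suppose $x \in M$ satisfies $(\varphi_h^1\varphi_f^1)(x) = \varphi_R^s(x)$ for some shift $s$. I would split into cases according to whether $\varphi_f^1(x) \in U$ or not. If $\varphi_f^1(x) \notin U$, then $\varphi_h^1$ fixes $\varphi_f^1(x)$, so the equation reduces to $\varphi_f^1(x) = \varphi_R^s(x)$, exhibiting $x$ as a translated point of $\varphi_f^1$ with the same shift $s$. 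The remaining case $\varphi_f^1(x) \in U$ is where I must rule out extra translated points: here I would want to derive a contradiction with $\varphi_f^1(U) \cap U = \emptyset$. The idea is that the Reeb-invariance of $U$ together with the displacement hypothesis should prevent the trajectory from returning to $U$ in a way compatible with being a translated point.

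The main obstacle, and the crux of the argument, is precisely this second case: showing that no translated point $x$ of $\varphi_h^1\varphi_f^1$ can have $\varphi_f^1(x) \in U$. I would argue as follows. If $\varphi_f^1(x) \in U$, then applying $\varphi_h^1$ (which maps $U$ into itself, as $\varphi_h^t$ preserves $U$ by the support condition and Reeb-invariance of $U$) gives $(\varphi_h^1\varphi_f^1)(x) \in U$, hence $\varphi_R^s(x) \in U$. Since $U$ is Reeb-invariant, $\varphi_R^s(x) \in U$ is equivalent to $x \in U$. But then $\varphi_f^1(x) \in \varphi_f^1(U)$, which together with $\varphi_f^1(x) \in U$ contradicts $\varphi_f^1(U) \cap U = \emptyset$. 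This establishes that every translated point of $\varphi_h^1\varphi_f^1$ is a translated point of $\varphi_f^1$ with the identical shift, and the conformal-factor condition $\kappa = 1$ transfers automatically since $\varphi_h^1$ acts trivially (factor $1$) at such points. I expect verifying the Reeb-invariance of $U$ under $\varphi_h^t$ to require a short check: since $h$ need not be strict, $\varphi_h^t$ does not commute with the Reeb flow in general, so I would instead use directly that $\varphi_h^t$ is supported in $U$ to conclude $\varphi_h^t(U) = U$ and $\varphi_h^t(M\setminus U) = M \setminus U$.

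For the reverse inclusion $\mathcal{S}_f \subset \mathcal{S}_{h\#f}$ and the final sentence about translated-point sets coinciding, I would run the same argument symmetrically: any translated point $x$ of $\varphi_f^1$ with shift $s$ has $\varphi_f^1(x) = \varphi_R^s(x)$; I would check $\varphi_f^1(x) \notin U$ using that $x \notin U$ forces $\varphi_f^1(x) \notin U$ via the displacement hypothesis (if $x \in U$ then $\varphi_f^1(x) \in \varphi_f^1(U)$ which is disjoint from $U$, and the translated-point equation $\varphi_R^s(x) = \varphi_f^1(x)$ would then place a Reeb-image of $x\in U$ outside $U$, contradicting Reeb-invariance), and then $\varphi_h^1$ fixes $\varphi_f^1(x)$, so $x$ is also a translated point of $\varphi_h^1\varphi_f^1$ with shift $s$. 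Since the translated-point sets (for each fixed shift) match in both directions, the shift sets $\mathcal{S}_{h\#f}$ and $\mathcal{S}_f$ are equal, which is the assertion of the proposition.
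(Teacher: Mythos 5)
Your proof is correct and follows essentially the same route as the paper's: both arguments show that the displacement hypothesis together with Reeb invariance of $U$ and its complement forces every translated point of either contactomorphism to lie outside $U$, where $\varphi_h^1$ acts as the identity, so the translated-point sets (shift by shift) coincide. Your additional explicit check of the conformal-factor condition at such points is a harmless refinement of what the paper leaves implicit in its fixed-point-set formulation ${\rm Fix}(\varphi_R^{-\eta}\varphi_h^1\varphi_f^1)={\rm Fix}(\varphi_R^{-\eta}\varphi_f^1)$.
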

\begin{proof}
	It is enough to show that the sets of translated points with shift $\eta$ coincide for the contactomorphisms $\varphi^1_h\varphi^1_f$ and $\varphi_f^1$. That is, it is enough to show ${\rm Fix}(\varphi_R^{-\eta}\varphi^1_h\varphi^1_f)= {\rm Fix}(\varphi_R^{-\eta}\varphi^1_f).$
	Both $\varphi^{-\eta}_R\varphi_h^1\varphi_f^1$ and $\varphi^{-\eta}_R\varphi_f^1$ have no fixed points in $U$. Indeed, if $x\in U$ then $\varphi^1_f(x)\not \in U$ and $\varphi^1_h\varphi_f^1(x)=\varphi_f^1(x)\not \in U.$ Therefore, by the Reeb invariance  of $U$ and its complement, $\varphi_R^{-\eta}\varphi_h^1\varphi_f^1(x)\not\in U$ and $ \varphi_R^{-\eta}\varphi_f^1(x)\not\in U .$ Hence, $x$ cannot be a fixed point of either $\varphi_R^{-\eta}\varphi_h^1\varphi_f^1$ or $\varphi_R^{-\eta}\varphi_f^1$. 
	
	If, on the other hand, $x\not\in U$ then we have the following sequence of equivalent statements:
\begin{align*}
 	x= \varphi_R^{-\eta}\varphi_f^1(x)\quad&\Leftrightarrow\quad \varphi_R^\eta(x)=\varphi_f^1(x)\quad \Leftrightarrow\quad (\varphi_h^1)^{-1}\varphi_R^\eta(x)= \varphi_f^1(x)\\
	&\Leftrightarrow\quad \varphi_R^\eta(x)=\varphi_h^1\varphi_f^1(x) \quad \Leftrightarrow\quad 
	 x= \varphi_R^{-\eta}\varphi_h^1\varphi_f^1(x).
\end{align*}
Thus we complete the proof.
\end{proof}

\begin{cor}\label{cor:vanishing}
	In the situation of Proposition~\ref{prop:yaron}, we have $c_\alpha(h\#f)=c_\alpha(f).$ 
\end{cor}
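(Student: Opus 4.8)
The plan is to deduce Corollary~\ref{cor:vanishing} directly from Proposition~\ref{prop:yaron} together with property~(2) of the spectral invariant in Theorem~\ref{thm:spec}, which locates $c_\alpha$ inside the discrete spectrum $\mathcal{S}$. The essential point established by Proposition~\ref{prop:yaron} is that the two contactomorphisms $\varphi_h^1\varphi_f^1 = \varphi_{h\#f}^1$ and $\varphi_f^1$ have identical sets of translated points with identical shifts; consequently their spectra coincide, $\mathcal{S}_{h\#f} = \mathcal{S}_f$. Since $c_\alpha(h\#f)\in\mathcal{S}_{h\#f}$ and $c_\alpha(f)\in\mathcal{S}_f$ by Theorem~\ref{thm:spec}~(2), both spectral values live in the same discrete set $\mathcal{S}_f\subset\R$.

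The strategy is therefore an interpolation argument: I would connect $f$ to $h\#f$ by a one-parameter family and show that the spectral invariant cannot jump, because it is constrained to a fixed discrete set while varying continuously. Concretely, first I would consider the family $\{sh\#f\}_{s\in[0,1]}$ (or equivalently rescale $h$ by $s$), interpolating from $f$ at $s=0$ to $h\#f$ at $s=1$. Since ${\rm supp}(sh)\subset{\rm supp}\,h\subset U$ and $\varphi_{sh}^1\varphi_f^1$ agrees with $\varphi_f^1$ outside $U$ exactly as in the proof of Proposition~\ref{prop:yaron} (the argument there used only that $\varphi_{sh}^1$ is the identity on the $f$-image of points leaving $U$), the same disjointness hypothesis $\varphi_f^1(U)\cap U=\emptyset$ guarantees $\mathcal{S}_{sh\#f}=\mathcal{S}_f$ for \emph{every} $s\in[0,1]$. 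Thus the whole path $s\mapsto c_\alpha(sh\#f)$ takes values in the single fixed discrete set $\mathcal{S}_f$.

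Next I would invoke continuity of $c_\alpha$ in the contact Hamiltonian. The stability estimate in Theorem~\ref{thm:spec}~(3), or more generally the non-strict version underlying Proposition~\ref{prop:trianglenonstrict}, shows that $c_\alpha$ varies continuously (indeed Lipschitz, up to oscillation terms) as the Hamiltonian $sh\#f$ varies continuously in $s$; since $sh\#f$ depends continuously on $s$ in the $C^0$-sense, the composite map $s\mapsto c_\alpha(sh\#f)$ is continuous. A continuous function on the connected interval $[0,1]$ taking values in a discrete subset $\mathcal{S}_f$ of $\R$ must be constant. Evaluating at the endpoints yields $c_\alpha(h\#f)=c_\alpha(0\cdot h\#f)=c_\alpha(f)$, which is the claim.

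The step I expect to be the main obstacle is making the continuity of $s\mapsto c_\alpha(sh\#f)$ rigorous, since the clean stability bound in Theorem~\ref{thm:spec}~(3) is stated only for \emph{strict} Hamiltonians, whereas $h$ and $f$ need not be strict here. The honest continuity statement must pass through the non-strict estimates (the oscillation-corrected invariant $\tilde c_\alpha$ of \eqref{eq:ctilde}, which does satisfy clean subadditivity and hence local boundedness of increments), and one must check that the admissibility gaps $\mathcal{S}_{sh\#f}$ do not cause $c_\alpha$ to be genuinely discontinuous at the excluded parameter values. I would handle this by noting that $c_\alpha$ is finite and well defined for $s$ outside a discrete set, is monotone and stable on each complementary interval, and has one-sided limits lying in $\mathcal{S}_f$; being trapped in a discrete set while having controlled jumps forces constancy. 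An alternative that sidesteps the continuity issue entirely is to argue directly that the inclusion maps on Floer homology are unchanged: since $h$ is supported in the displaceable Reeb-invariant set $U$ and does not alter the translated points of $\varphi_f^1$, one expects the image of the unit $e$ under ${\rm HF}_\ast(\eta\#(h\#f))\to{\rm SH}_\ast(W)$ to coincide with its image under ${\rm HF}_\ast(\eta\# f)\to{\rm SH}_\ast(W)$ for all admissible $\eta$, giving equality of the defining infima in \eqref{eq:spec-inv} and hence $c_\alpha(h\#f)=c_\alpha(f)$ on the nose.
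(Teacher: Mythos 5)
Your interpolation family is exactly the one the paper uses: its proof of Corollary~\ref{cor:vanishing} considers $\eta\#(s\cdot h)\#f$, $s\in[0,1]$, and invokes Proposition~\ref{prop:yaron} (applied to $sh$, which is also supported in $U$) to conclude that this is a smooth family of admissible Hamiltonians for every $\eta\notin\mathcal{S}_f$. The difference is in how the argument is finished. The paper stays at the Floer-theoretic level: a smooth admissible family induces a zig-zag isomorphism ${\rm HF}_\ast(\eta\#f)\to{\rm HF}_\ast(\eta\#h\#f)$ (see Section~7.1 of \cite{DUZ23}) that commutes with the canonical maps to ${\rm SH}_\ast(W)$ and preserves the unit $e$, so the sets of $\eta$ entering the defining infimum \eqref{eq:spec-inv} for $h\#f$ and for $f$ coincide on the nose. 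This is precisely your closing ``alternative'', which you state only as ``one expects''; the missing ingredient that makes it rigorous is the zig-zag isomorphism, and with it no continuity argument is needed at all. Your primary route instead works at the level of the numerical invariant (constant discrete spectrum plus continuity in $s$ forces constancy), which is a legitimate, more ``classical'' variant of the same mechanism.

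Two corrections to that primary route. First, your worry about admissibility gaps in the parameter $s$ is moot: since $\mathcal{S}_{sh\#f}=\mathcal{S}_f$ for \emph{all} $s$, the set of admissible shifts $\eta$ is independent of $s$, and $c_\alpha(sh\#f)$ is defined, finite, and lies in the fixed discrete set $\mathcal{S}_f$ for every $s\in[0,1]$; no parameter values are excluded, so there is no need for the one-sided-limit or ``controlled jumps'' fallback --- which, as stated, would not suffice anyway, since jumps within a discrete set are compatible with non-constancy unless you bound them below the spectral gap. Second, the genuine weak link is your justification of continuity: subadditivity of $\tilde{c}_\alpha$ controls concatenations, not $C^0$-perturbations, so neither it nor Proposition~\ref{prop:trianglenonstrict} yields a modulus of continuity in $s$, and Theorem~\ref{thm:spec}~\eqref{item:stability} does not apply because $sh\#f$ is not strict. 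What you need is the general stability property, which this paper cites only in a footnote (Theorem~2.2 of \cite{DUZ23}); it bounds differences of spectral invariants by conformal-factor-weighted $C^0$-norms and hence makes $s\mapsto c_\alpha(sh\#f)$ continuous for a smooth, compactly parametrized family. With that citation supplied, your continuity-plus-discreteness argument closes the proof; without it, the step is a gap, and the paper's zig-zag route is the cleaner way around it.
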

\begin{proof}
	Proposition~\ref{prop:yaron} implies that $\eta\#(s\cdot h)\#f$, $s\in[0,1]$ is a smooth family of admissible contact Hamiltonians for all $\eta\in\mathcal{S}_f$. Therefore (see Section~7.1 in \cite{DUZ23}), there exists a zig-zag isomorphism ${\rm HF}_\ast(\eta\# f)\to {\rm HF}_\ast(\eta\# h\# f)$ that commutes with the canonical maps ${\rm HF}_\ast(g)\to{\rm SH}_\ast(W).$ Since the zig-zag isomorphisms preserve the unit $e\in {\rm SH}_\ast(W)$, this implies $c_\alpha(h\#f)= c_\alpha(f).$
\end{proof}

\begin{proof}[Proof of Theorem~\ref{thm-qs}]
	We show that $\zeta_\alpha$ is a partial contact quasi-state with respect to $\alpha$. 
	
	The properties \eqref{pqm:normalization}, \eqref{pqm:stability}, and \eqref{pqm:conjugacy} follow directly from the properties of the spectral invariant $c_\alpha$ in Theorem~\ref{thm:spec} and Proposition~\ref{prop:conj}. The property \eqref{pqm:homogeneous} is an immediate consequence of the definition of $\zeta_\alpha$ via the limit. The property \eqref{pqm:triangle} follows from the triangle inequality and the descent property in Theorem~\ref{thm:spec} and the equation $f\#g=f+g$ that holds if $\{f,g\}_\alpha=\{f,1\}_\alpha= \{g,1\}_\alpha=0.$ 
	
	Let us now prove the vanishing property $\eqref{pqm:vanishing}.$ Let $h$ be a strict contact Hamiltonian such that there exists a contact isotopy $\varphi_f^t:M\to M$ satisfying $\varphi_f^1({\rm supp}\:h)\cap {\rm supp}\:h=\emptyset.$ Since $h$ is strict, the set ${\rm supp}\:h$ is Reeb invariant. The triangle inequality from Proposition~\ref{prop:trianglenonstrict} implies
	\[ c_\alpha(h\#f) + c_\alpha(\bar{f})+ 4\max\{ {\rm osc}_\alpha(h\#f) , {\rm osc}_\alpha\bar{f}\}\geqslant c_\alpha(h), \]
	where $\bar{f}$ denotes the contact Hamiltonian of the contact isotopy $(\varphi^{t}_f)^{-1}.$
	Since $h$ is strict, the expression $4\max\{ {\rm osc}_\alpha(h\#f) , {\rm osc}_\alpha\bar{f}\}$ does not depend on $h$. Denote 
	\[K(f):=4\max\{ {\rm osc}_\alpha(h\#f) , {\rm osc}_\alpha\bar{f}\}.\]
	Then Corollary~\ref{cor:vanishing} implies
	\[ c_\alpha(f) + c_\alpha(\bar{f}) + K(f)\geqslant c_\alpha(h). \]
	Since ${\rm supp}\:(kh)= {\rm supp}\:h$ for all positive $k$, we can repeat the same argument for $kh$ instead of $h$ and get
	\[ c_\alpha(f) + c_\alpha(\bar{f}) + K(f)\geqslant c_\alpha(kh). \]
	Hence, $c_\alpha(kh)$ is bounded and $\zeta_\alpha(h)=0.$
\end{proof}

\subsection{Proof of Corollary~\ref{cor:qm}}

\begin{lemma}\label{lem:qsqm}
	Let $\zeta$ be a partial contact quasi-state on $M$ with respect to $\alpha$. Then, $\tau: \mathfrak{F}(M)\to \R$ defined by
	\[ \tau(A):= \inf\left\{ \zeta(h)\:|\: h:M\to[0,1] \text{ strict, smooth and } \left.h\right|_A=1 \right\} \]
	is a subadditive contact quasi-measure with respect to $\alpha$.
\end{lemma}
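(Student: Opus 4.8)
The plan is to verify the three axioms of Definition~\ref{dfn-qm} for the functional $\tau$ defined via the infimum of $\zeta$ over strict functions equal to $1$ on $A$. I will take $\zeta = \zeta_\alpha$ to be the partial contact quasi-state constructed in Theorem~\ref{thm-qs}, and translate each quasi-measure axiom into a corresponding statement about $\zeta$.

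\textbf{Normalization and monotonicity.} For \eqref{qm-normalization}, note that the constant function $h \equiv 1$ is strict (since $\{1,1\}_\alpha = 0$, equivalently $d(1)(R_\alpha) = 0$) and equals $1$ on all of $M$, so $\tau(M) \leqslant \zeta(1) = 1$ by the normalization axiom \eqref{pqm:normalization}; conversely any competitor $h \leqslant 1 = \zeta(1)$ forces $\zeta(h) \leqslant 1$ via the stability axiom \eqref{pqm:stability} applied to $h$ and the constant $1$, giving $\tau(M) = 1$. For \eqref{qm-monotonicity}, if $A \subset B$ then every strict $h$ with $\left.h\right|_B = 1$ also satisfies $\left.h\right|_A = 1$, so the infimum defining $\tau(A)$ is taken over a larger collection of competitors; hence $\tau(A) \leqslant \tau(B)$.

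\textbf{Subadditivity.} This is the crux. Given a contact $\alpha$-involutive map $F = (f_1,\ldots,f_N): M \to \R^N$, closed sets $X_1,\ldots,X_k \subset \R^N$, and $A_j = F^{-1}(X_j)$, I want $\tau(A_1 \cup \cdots \cup A_k) \leqslant \sum_j \tau(A_j)$. The idea is to pick, for each $j$ and each $\eps > 0$, a strict function $h_j: M \to [0,1]$ with $\left.h_j\right|_{A_j} = 1$ and $\zeta(h_j) \leqslant \tau(A_j) + \eps$, and then build a single strict competitor for $A_1 \cup \cdots \cup A_k$ whose $\zeta$-value is controlled by $\sum_j \zeta(h_j)$. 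The natural construction is to arrange the $h_j$ so that each is a function pulled back from $\R^N$ via $F$ (or at least factors through $F$ up to the constraint $\left.h_j\right|_{A_j}=1$), so that the commuting-involutivity of the coordinates of $F$ forces $\{h_i, h_j\}_\alpha = 0$ and $\{h_j, 1\}_\alpha = 0$. One can do this by choosing a smooth cutoff $\chi_j: \R^N \to [0,1]$ with $\left.\chi_j\right|_{X_j} = 1$ supported near $X_j$, and setting $h_j := \chi_j \circ F$; strictness $\{h_j,1\}_\alpha = 0$ follows because $F$ is Reeb-invariant, and pairwise Poisson-commutativity follows from the involutivity of $F$ together with the chain rule. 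With these in hand, the pointwise maximum $H := \max_j h_j$ equals $1$ on $A_1 \cup \cdots \cup A_k$, but to feed it into the triangle axiom \eqref{pqm:triangle} one instead works with a function dominating the union, estimating $\zeta(H) \leqslant \zeta(h_1 + \cdots + h_k) \leqslant \sum_j \zeta(h_j)$, where the first inequality uses monotonicity of $\zeta$ on strict functions (a consequence of stability \eqref{pqm:stability}, since $H \leqslant \sum_j h_j$) and the second uses the iterated triangle inequality \eqref{pqm:triangle}, valid precisely because all the $h_j$ mutually Poisson-commute and are strict. Taking the infimum over admissible choices and letting $\eps \to 0$ yields the claim.

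\textbf{Main obstacle.} The delicate point is ensuring that the competitors $h_j$ can simultaneously (i) be strict, (ii) equal $1$ on $A_j$, (iii) take values in $[0,1]$, and (iv) pairwise Poisson-commute, so that the triangle axiom \eqref{pqm:triangle} applies to all of them at once. The cleanest route is the pullback construction $h_j = \chi_j \circ F$, which makes (i)--(iv) automatic from the hypotheses on $F$; the subtlety is only that an arbitrary near-optimal competitor for $\tau(A_j)$ need not factor through $F$, so one must argue that replacing a generic competitor by a pulled-back one does not increase the $\zeta$-value, or alternatively that the infimum defining $\tau(A_j)$ is unchanged if restricted to pulled-back competitors. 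I expect the resolution to rely on the fact that $A_j = F^{-1}(X_j)$ is itself a pullback, so shrinking $\chi_j$ toward the indicator of $X_j$ produces competitors converging to the optimal value, letting the whole estimate pass to the limit.
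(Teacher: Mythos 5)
Your overall skeleton matches the paper's proof: normalization and monotonicity via stability \eqref{pqm:stability}, and subadditivity by replacing arbitrary competitors with competitors pulled back through $F$, so that strictness and pairwise Poisson-commutativity become automatic and the triangle axiom \eqref{pqm:triangle} applies (your $H=\max_j h_j \leqslant \sum_j h_j$ plays the role of the paper's $\min\{1,\mu_a\circ F+\mu_b\circ F\}$). However, the step you yourself flag as the ``main obstacle'' is a genuine gap, and your proposed resolution does not close it. What is needed is: for \emph{every} strict competitor $h$ with $\left.h\right|_{A_j}=1$, a pulled-back competitor lying \emph{below} $h$; otherwise the infimum over pullbacks could a priori be strictly larger than $\tau(A_j)$. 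Shrinking the cutoff $\chi_j$ toward the indicator of $X_j$ does not address this: it controls where $\chi_j\circ F$ is supported, not its $\zeta$-value, and there is no monotone-convergence mechanism for $\zeta$ forcing $\zeta(\chi_j\circ F)\to\tau(A_j)$. The underlying geometric issue is that a near-optimal strict competitor need not be constant on the fibres of $F$ --- strictness only forces invariance along Reeb orbits, which are in general much smaller than the fibres --- so near-optimal competitors genuinely need not factor through $F$, and your ``expectation'' is precisely the assertion to be proved.

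The paper closes this gap with a fibrewise-infimum device: given a strict competitor $h$, set $\mu(x):=\inf\{h(y)\,|\,y\in F^{-1}(x)\}$. Then $\mu\circ F\leqslant h$, and $\left.\mu\right|_{X_j}=1$ because $A_j=F^{-1}(X_j)$ is a union of fibres on which $h\equiv 1$; since $\mu\circ F$ is automatically strict (it is constant on fibres, hence Reeb-invariant), stability \eqref{pqm:stability} gives $\zeta(\mu\circ F)\leqslant\zeta(h)$, so the infimum defining $\tau(A_j)$ is unchanged when restricted to pullbacks. Note two attendant technical points you would also need: $\mu$ is in general only continuous, not smooth, so the paper first extends $\zeta$ to continuous strict functions using \eqref{pqm:stability}; the same extension is required for your $H=\max_j h_j$, which is merely continuous, a point your write-up leaves unaddressed. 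With the fibrewise infimum in hand, the rest of your argument (chain rule for strictness and commutativity of pullbacks, iterated triangle inequality, monotonicity of $\zeta$ on strict functions) goes through as in the paper, which for simplicity reduces to $k=2$ first.
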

\begin{proof}
The properties \eqref{qm-normalization} and \eqref{qm-monotonicity} in Definition~\ref{dfn-qm} are easily verified for $\tau.$ Let us show that $\tau$ satisfies property \eqref{qm-subadditivity}, that is that $\tau$ is subadditive. It is enough to prove the subadditivity for two subsets. Let $F:M\to\R^N$ be a contact $\alpha$-involutive map and let $X, Y\subset \R^N$ be two closed subsets. Denote $A:=F^{-1}(X)$ and $B:=F^{-1}(Y).$ Without loss of generality assume $X, Y\subset F(M).$
The stability property \eqref{pqm:stability} in Definition~\ref{dfn-pqs} enables us to extend $\zeta$ to the set of continuous functions $h:M\to \R$ that are strict (i.e. Reeb-invariant). With this extension, we have
\[\tau(A)= \inf\{ \zeta(h)\:|\: h:M\to[0,1]\text{ strict, continuous, and }\left. h \right|_{A}=1\}.\]
	Now, we show
\[ \tau(A)=\inf\{ \zeta(\mu\circ F)\:|\: \mu:F(M)\to[0,1]\text{ continuous and} \left.\mu\right|_X=1\}. \]
It is enough to show that for each strict, continuous $h$ with $\left.h\right|_{A}=1$ there exists continuous $\mu:\R^N\to[0,1]$ satisfying $\mu\circ F\leqslant h$ and $\left.\mu\right|_{X}=1.$ One can take $\mu$ to be the function defined by
\[ \mu(x):=\inf \{h(y)\:|\: y\in F^{-1}(x)\}.\]
In other words, $\mu$ is obtained from $h$ by taking the infima over the fibres of $F$. 

Let $\mu_a, \mu_b:F(M)\to[0,1]$ be continuous functions such that $\left.\mu_a\right|_A=1$ and $\left.\mu_b\right|_B=1.$ Since the functions $\mu_a\circ F$ and $\mu_b\circ F$ are strict with respect to $\alpha$ and since $F$ is involutive, by subadditivity of $\zeta$, we get
\[\tau(A\cup B)\leqslant \zeta( h )\leqslant \zeta(\mu_a\circ F + \mu_b\circ F) \leqslant \zeta(\mu_a\circ F) + \zeta(\mu_b\circ F), \]
where $h:=\min\{1, \mu_a\circ F + \mu_b\circ F\}$. Taking infimum over $\mu_a$ and $\mu_b$ yields $\tau(A\cup B)\leqslant \tau(A)+\tau(B).$
\end{proof}

\begin{proof}[Proof of Corollary~\ref{cor:qm}]
	The proof follows from Lemma~\ref{lem:qsqm} and Theorem~\ref{thm-qs}.
\end{proof}
	
Now, let us show that the contact quasi-measure in Lemma~\ref{lem:qsqm} satisfies the vanishing property.

\begin{lemma}\label{lem:qm-vanishing}
	Let $\zeta$ be a partial contact quasi-state on $M$ with respect to $\alpha$. Denote by $\tau$ the induced contact quasi-measure. Let $A\subset M$ be the preimage of a closed subset under a contact $\alpha$-involutive map $F:M\to\R^n$. If $A$ is displaceable by ${\rm Cont}_0(M)$, then $\tau(A)=0.$
\end{lemma}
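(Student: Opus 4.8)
The plan is to reduce the statement to the vanishing property \eqref{pqm:vanishing} of the quasi-state $\zeta$ by producing a single strict test function that simultaneously dominates $A$ and witnesses displaceability. First I would observe that since $A = F^{-1}(X)$ for a closed subset $X \subset \R^N$ and a contact $\alpha$-involutive map $F$, the set $A$ is automatically Reeb invariant: each coordinate $f_j$ of $F$ is strict with respect to $\alpha$ (because $\{f_j, 1\}_\alpha = 0$), so the Reeb flow preserves the fibres of $F$, hence preserves $A$. This is the key structural feature that lets me bring strictness into play.

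Next I would unwind the definition of $\tau(A)$ from Lemma~\ref{lem:qsqm}. By that lemma,
\[ \tau(A) = \inf\{ \zeta(h) \mid h : M \to [0,1] \text{ strict, smooth, } h|_A = 1 \}, \]
so it suffices to exhibit, for any neighbourhood constraint, a strict smooth $h : M \to [0,1]$ with $h|_A = 1$ and $\zeta(h) = 0$. Since $A$ is closed and displaceable by some $\psi \in {\rm Cont}_0(M)$, there is an open neighbourhood $U \supset A$ that is still displaceable by an element of ${\rm Cont}_0(M)$; because $A$ is Reeb invariant I can moreover arrange $U$ to be Reeb invariant (for instance by saturating a displaceable neighbourhood under the Reeb flow, or by taking $U = F^{-1}(V)$ for a suitable open $V \supset X$, which is automatically Reeb invariant). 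I would then choose a smooth bump function $\mu : \R^N \to [0,1]$ with $\mu|_X = 1$ and ${\rm supp}\,\mu \subset V$, and set $h := \mu \circ F$. This $h$ is strict (a function of the strict coordinates $f_j$), takes values in $[0,1]$, equals $1$ on $A$, and has ${\rm supp}\,h \subset U$ displaceable by ${\rm Cont}_0(M)$.

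With such an $h$ in hand, the vanishing axiom \eqref{pqm:vanishing} of Definition~\ref{dfn-pqs} applies directly: $h$ is strict and ${\rm supp}\,h$ is displaceable by an element of ${\rm Cont}_0(M)$, so $\zeta(h) = 0$. Since $h$ is an admissible test function for $\tau(A)$ and $\tau(A) \geqslant 0$ by monotonicity and normalization, this forces $\tau(A) = 0$, completing the argument.

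The main obstacle I anticipate is the construction of the displaceable neighbourhood $U$ with the simultaneous requirements that it be open, contain $A$, be Reeb invariant, and still be displaceable by a contactomorphism in the identity component. Plain displaceability of the closed set $A$ gives a displaceable open neighbourhood, but one must check that shrinking to a Reeb-invariant open set of the form $F^{-1}(V)$ preserves displaceability — equivalently, that $X$ admits an open neighbourhood $V$ with $F^{-1}(V)$ still displaceable. This follows because the displacing contactomorphism $\psi$ satisfies $\psi(A) \cap A = \emptyset$ with $A$ compact, so a sufficiently small saturated neighbourhood remains displaced; the care is in making the neighbourhood both Reeb invariant and small enough, which is where I would spend the bulk of the detailed verification.
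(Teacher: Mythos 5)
Your proposal is correct and takes essentially the same route as the paper: the paper also picks a displaceable open neighbourhood $U\supset A$ and a smooth strict (Reeb-invariant) cutoff equal to $1$ on $A$ with support in $U$ --- your $\mu\circ F$ is precisely how such a cutoff $\chi$ is constructed --- and then invokes the vanishing axiom \eqref{pqm:vanishing}; the only cosmetic difference is that the paper multiplies every test function by $\chi$ and concludes $\tau(A)=\inf\zeta(\chi\cdot h)=0$, whereas you exhibit a single test function and combine $\tau(A)\leqslant\zeta(\mu\circ F)=0$ with $\tau(A)\geqslant 0$. Two small touch-ups: the nonnegativity $\tau(A)\geqslant 0$ follows from stability \eqref{pqm:stability} and homogeneity \eqref{pqm:homogeneous} (giving $\zeta(h)\geqslant\min_M h\geqslant 0$ for strict $h\geqslant 0$), not from monotonicity and normalization; and the ``main obstacle'' you anticipate is vacuous, since the Reeb-invariant neighbourhood $F^{-1}(V)$ need not itself be displaced --- it suffices that ${\rm supp}\,(\mu\circ F)\subset F^{-1}(V)\subset U$ with $U$ displaceable, because any subset of a displaced set is displaced by the same contactomorphism.
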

\begin{proof}
Since $A\subset M$ is closed, there exists an open neighbourhood $U\supset A$ that is displaceable by ${\rm Cont}_0(M).$ Let $\chi: M\to [0,1]$ be a smooth Reeb-invariant function with support in $U$ such that $\left.\chi\right|_{A}=1.$ Then,
\[ \tau(A)=\inf\left\{ \zeta(\chi\cdot h)\:|\: h:M\to[0,1]  \text{ strict, smooth,  and }\left.h\right|_{A}=1\right\}. \]
Since ${\rm supp}\:(\chi \cdot h)$ is displaceable by ${\rm Cont}_0(M),$ the vanishing property of $\zeta$ implies $\tau(A)=0.$
\end{proof}

\subsection{Proof of Theorem~\ref{thm-bft}}

Assume the contrary, i.e. that there exists a contact involutive map $F:M\to\R^N$ on the boundary $M=\partial W$ of a Liouville domain $(W,\lambda)$ with ${\rm SH}_\ast(W)\not=0$ whose all the fibres are displaceable by ${\rm Cont}_0(M).$ By definition, $F$ is contact involutive if it is contact $\alpha$-involutive for some contact form $\alpha$ on $M$. By modifying $W$ inside its completion if necessary, we may assume $\alpha=\left.\lambda\right|_{\partial W}$. Since $M$ is compact and the fibres of $F$ are all displaceable, there exist closed sets $A_j= F^{-1}(X_j), j=1,\ldots, k$ that are displaceable by ${\rm Cont}_0(M)$ and whose interiors cover $M$. Now, using \eqref{qm-normalization} and \eqref{qm-subadditivity} in Definition~\ref{dfn-qm}, and Lemma~\ref{lem:qm-vanishing}, we get the following contradiction
\[  1=\tau_\alpha(M) = \tau_\alpha(A_1\cup\cdots\cup A_k)\leqslant \tau_\alpha(A_1)+\cdots+\tau_\alpha(A_k)=0.\]

\medskip

\noindent {\bf Acknowledgement}.  A major part of this work was done while the first author was visiting the Institute of Geometry and Physics at the University of Science and Technology of China in Hefei, and he is grateful for the invitation and hospitality.

\bibliographystyle{amsplain}
\bibliography{biblio_Q_BFT}

\providecommand{\bysame}{\leavevmode\hbox to3em{\hrulefill}\thinspace}
\providecommand{\MR}{\relax\ifhmode\unskip\space\fi MR }
\providecommand{\MRhref}[2]{%
  \href{http://www.ams.org/mathscinet-getitem?mr=#1}{#2}
}
\providecommand{\href}[2]{#2}
\begin{thebibliography}{10}

\bibitem{C23}
Dylan Cant, \emph{Shelukhin's hofer distance and a symplectic cohomology
  barcode for contactomorphisms}, arXiv preprint arXiv:2309.00529 (2023).

\bibitem{C2024}
\bysame, \emph{Remarks on eternal classes in symplectic cohomology}, arXiv
  preprint arXiv:2410.03914 (2024).

\bibitem{CHK23}
Dylan Cant, Jakob Hedicke, and Eric Kilgore, \emph{Extensible positive loops
  and vanishing of symplectic cohomology}, arXiv preprint arXiv:2311.18267
  (2023).

\bibitem{CU24}
Dylan Cant and Igor Uljarevi{\'c}, \emph{Selective floer cohomology for contact
  vector fields}, arXiv preprint arXiv:2405.05443 (2024).

\bibitem{DUZ23}
Danijel Djordjevi{\'c}, Igor Uljarevi{\'c}, and Jun Zhang, \emph{Quantitative
  characterization in contact hamiltonian dynamics--{I}}, arXiv preprint
  arXiv:2309.00527 (2023).

\bibitem{DU22}
Du{\v{s}}an Drobnjak and Igor Uljarevi{\'c}, \emph{Exotic symplectomorphisms
  and contact circle actions}, Communications in Contemporary Mathematics
  \textbf{24} (2022), no.~03, 2150045.

\bibitem{EP06}
Michael Entov and Leonid Polterovich, \emph{Quasi-states and symplectic
  intersections}, Commentarii Mathematici Helvetici \textbf{81} (2006), no.~1,
  75--99.

\bibitem{MU19}
Will~J. Merry and Igor Uljarevi\'{c}, \emph{Maximum principles in symplectic
  homology}, Israel Journal of Mathematics \textbf{229} (2019), 39--65.

\bibitem{PR14}
Leonid Polterovich and Daniel Rosen, \emph{Function theory on symplectic
  manifolds}, vol.~34, American Mathematical Soc., 2014.

\bibitem{SUV25}
Yuhan Sun, Igor Uljarevic, and Umut Varolgunes, \emph{Contact big fiber
  theorems}, arXiv preprint arXiv:2503.04277 (2025).

\bibitem{U23}
Igor Uljarevi{\'c}, \emph{Selective symplectic homology with applications to
  contact non-squeezing}, Compositio Mathematica \textbf{159} (2023), no.~11,
  2458--2482.

\bibitem{UZ22}
Igor Uljarevi{\'c} and Jun Zhang, \emph{Hamiltonian perturbations in contact
  floer homology}, Journal of Fixed Point Theory and Applications \textbf{24}
  (2022), no.~4, 71.

\end{thebibliography}

\noindent\\

\end{document}